\newtheorem{theorem}{Theorem}
\newtheorem{proposition}[theorem]{Proposition}
\newtheorem{lemma}[theorem]{Lemma}
\theoremstyle{definition}
\newtheorem{definition}[theorem]{Definition}
\newtheorem{remark}[theorem]{Remark}
\newtheorem{example}[theorem]{Example}
\theoremstyle{plain}
\newtheorem{corollary}[theorem]{Corollary}
\newcommand{\vt}{\vspace{.1cm}}
\newcommand{\R}{\mathbb{R} }
\newcommand{\q}{\mathbb{Q} }
\newcommand{\N}{\mathbb{N} }
\newcommand{\h}{\mathbb{H}}
\newcommand{\s}{\mathbb{S}}
\renewcommand{\rho}{\varrho}
\renewcommand{\theta}{\varTheta}
\renewcommand{\Theta}{\varTheta}
\renewcommand{\Sigma}{\varSigma}
\renewcommand{\Omega}{\varOmega}
\renewcommand{\Lambda}{\varLambda}
\renewcommand{\tau}{\uptau}
\newcommand{\overbar}[1]{\mkern 1.5mu\overline{\mkern-1.5mu#1\mkern-1.5mu}\mkern 1.5mu}
\newcommand{\qr}{\q_\epsilon^n\times\R}
\newcommand{\la}{\langle}
\newcommand{\ra}{\rangle}
\newcommand{\pt}{\partial_t}
\newcommand{\tpitchfork}{%
  \vbox{
    \baselineskip\z@skip
    \lineskip-.52ex
    \lineskiplimit\maxdimen
    \m@th
    \ialign{##\crcr\hidewidth\smash{$-$}\hidewidth\crcr$\pitchfork$\crcr}
  }%
}
\begin{document}

\title
{Isoparametric Hypersurfaces of  $\h^n\times\R$ and
$\s^n\times\R$}
\author{Ronaldo F. de Lima \and Giuseppe Pipoli}
\address[A1]{Departamento de Matem\'atica - Universidade Federal do Rio Grande do Norte,
Lagoa Nova - 59072-970.}
\email{ronaldo.freire@ufrn.br}
\address[A2]{Department of Information Engineering,
Computer Science and Mathematics, Università degli Studi
dell’Aquila, via Vetoio, Coppito - 67100.}
\email{giuseppe.pipoli@univaq.it}

\maketitle

\begin{abstract}
We classify the isoparametric hypersurfaces and the homogeneous
hypersurfaces of $\h^n\times\R$
and $\s^n\times\R$, $n\ge 2$,  by establishing that any such
hypersurface has constant angle function and
constant principal curvatures.

\vspace{.15cm}
\noindent{\it 2020 Mathematics Subject Classification:} 53A10 (primary), 53B25,  53C30 (secondary).

\vspace{.1cm}

\noindent{\it Key words and phrases:} isoparametric hypersurfaces --
homogeneous hypersurfaces -- constant principal curvatures -- product space.
\end{abstract}

\section{Introduction}
The construction and classification of isoparametric hypersurfaces
in general Riemannian manifolds
has become a matter of great interest in submanifold theory since
the early works by Cartan on this subject.
In a series of four remarkable papers~\cite{cartan1}--\cite{cartan4},
published in the late 1930's, he classified the isoparametric
hypersurfaces of the $n$-hyperbolic spaces $\h^n$, and brought to light that
the classification of the isoparametric hypersurfaces of the $n$-spheres
$\s^n$ is a rather intricate problem. As a matter of fact,
this classification has been built through many works over the last decades,
and only recently it was announced to be complete (see~\cite{cecil,chi}).

The theory of isoparametric hypersurfaces  connects with many branches of
phys\-ics and mathematics. Recently,
it has been applied to establish existence and classification results
to various extrinsic geometric flows
in Riemannian manifolds, such
as the mean curvature flow, the inverse mean curvature flow, elliptic Weingarten flows, and
higher order mean curvature flows
(see~\cite{alencar-tenenblat, delima, delimaW, delima-pipoli, GSS, reis-tenenblat}).

By definition, an isoparametric hypersurface has constant mean curvature,
as do any nearby (locally defined) hypersurface which is parallel to it.
Homogeneous hypersurfaces, that is, those which are codimension one orbits
of isometric actions on the ambient space, are well known
examples of isoparametric hypersurfaces.
It turns out that all isoparametric hypersurfaces of Euclidean space $\R^n$
(classified by Segre~\cite{segre}),
as well as those of hyperbolic space $\h^n$
(classified by Cartan~\cite{cartan1}), are homogeneous.
In  $\s^n,$ one has the two types, homogeneous (classified by Hsiang and Lawson~\cite{HL})
and nonhomogeneous (cf.~\cite{FKM,ozeki-takeuchi}).

Besides being isoparametric, homogeneous hypersurfaces have
constant principal curvatures. However, the constancy of the
principal curvatures implies neither being isoparametric nor homogeneous.
For instance, in simply connected space forms,
as proved by Cartan, a hypersurface is isoparametric if and only if it has
constant principal curvatures. On the other hand, as we pointed out, there are
nonhomogeneous isoparametric hypersurfaces in $\s^n$. Also,
Rodríguez-Vázquez~\cite{avazquez} showed that, for each $n\ge 3$,
there exists an $n$-dimensional torus which
contains a non-isoparametric hypersurface whose principal curvatures are constant.
In addition,  Guimarães, Santos and Santos~\cite{GSS} applied the theory of
mean curvature flow to obtain
a class of Riemannian manifolds that admit non-isoparametric
hypersurfaces with constant principal curvatures.
It should also be mentioned that, in certain Riemannian manifolds,
such as Damek-Ricci spaces,
there exist isoparametric hypersurfaces
whose principal curvatures are not constant functions (see~\cite{ramos-vazquez,geetal}).

In~\cite{dominguezvazquez-manzano}, Domínguez-Vázquez and Manzano
classified the isoparametric surfaces of all simply connected homogeneous
3-manifolds with 4-dimensional isometry
group. These are the so called $\mathbb E(\kappa,\tau)$ spaces with $k-4\tau^2\ne 0$,
which include the products $\h^2\times\R$ and $\s^2\times\R$.
Their result also provides  the classification
of the homogeneous surfaces of these spaces, as well as
of the surfaces having constant principal curvatures.
Other  results on  classification of isoparametric or
constant principal curvature
hypersurfaces of Riemannian manifolds of nonconstant
sectional curvature were
obtained in~\cite{{chaves},{ramosetal},{vazquez},{vazquez-gorodski},manfio,{santos}}.

Inspired by  Domínguez-Vázquez and Manzano's work,
we aim to establish here the following classification result for
hypersurfaces of the products  $\qr$, where $\q_\epsilon^n$
stands for the $n(\ge\hspace{-.1cm}2)$-dimensional  simply connected space form of
constant sectional curvature $\epsilon=\pm 1$, that is,
the hyperbolic space $\h^n$ for $\epsilon=-1$,
and the sphere $\s^n$ for $\epsilon=1$:

\begin{restatable}{thm}{main} \label{thm-main}
Let $\Sigma$ be a connected hypersurface of $\mathbb Q_\epsilon^n\times\R$.
Then the following are equivalent:
\begin{enumerate}[parsep=1ex]
  \item[\rm (i)] $\Sigma$ is isoparametric.
  \item[\rm (ii)] $\Sigma$ has constant angle and constant principal curvatures.
  \item[\rm (iii)] $\Sigma$ is an open subset of one of the following complete hypersurfaces:
  \begin{itemize}[parsep=1ex]
    \item[\rm (a)] a horizontal slice $\mathbb Q_\epsilon^n\times\{t_0\}$,
    \item[\rm (b)] a vertical cylinder over a complete isoparametric hypersurface of $\mathbb Q_\epsilon^n$,
    \item[\rm (c)] a parabolic bowl of $\mathbb H^n\times\R$ (see Fig.~\ref{fig-parabolichelicoid}).
  \end{itemize}
\end{enumerate}
Moreover, in the hyperbolic case $\epsilon=-1$, the condition
\begin{enumerate}
  \item[\rm (iv)] $\Sigma$ is an open subset of a homogeneous hypersurface
\end{enumerate}
is also equivalent to {\rm (i)--(iii)}. In the spherical case $\epsilon=1$, {\rm (iv)} is equivalent to
\begin{enumerate}
  \item[\rm (v)] $\Sigma$ is an open subset of either a horizontal slice or a vertical cylinder
  over a complete homogeneous hypersurface of \,$\mathbb S^n$.
\end{enumerate}
\end{restatable}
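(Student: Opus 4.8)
The plan is to prove the cycle of implications $\mathrm{(iii)}\Rightarrow\mathrm{(i)}\Rightarrow\mathrm{(ii)}\Rightarrow\mathrm{(iii)}$ and then to treat $\mathrm{(iv)}$--$\mathrm{(v)}$ separately. Throughout I will use the splitting $\pt=T+\nu N$ of the vertical field along $\Sigma$, where $N$ is a local unit normal, $T$ is tangent to $\Sigma$ and $\nu=\la\pt,N\ra$ is the angle function, together with the structure equations $\nabla_XT=\nu AX$ and $X(\nu)=-\la AX,T\ra$, the Gauss--Codazzi equations of $\qr$ (in particular $(\nabla_XA)Y-(\nabla_YA)X=\epsilon\nu(\la X,T\ra Y-\la Y,T\ra X)$), and the Bochner-type identity $\Delta\nu=\nu\big(\epsilon(n-1)(1-\nu^2)-|A|^2\big)$, valid on any constant mean curvature hypersurface of $\qr$. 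The implication $\mathrm{(iii)}\Rightarrow\mathrm{(i)}$ is the easy one: for each model I would exhibit an isoparametric function having it as a regular level set --- the height $t$ for a horizontal slice; the pull-back of an isoparametric function of $\mathbb{Q}_\epsilon^n$ for a vertical cylinder, so that $|\nabla f|$ and $\Delta f$ remain functions of $f$; and, for a parabolic bowl of $\h^n\times\R$, a function $F(\mathfrak b,t)$ of the Busemann function $\mathfrak b$ of $\h^n$ and the height, whose transnormality and isoparametricity reduce, via $|\nabla\mathfrak b|\equiv1$ and $\Delta\mathfrak b\equiv\mathrm{const}$, to an ODE system solved by the family of parallel bowls.

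The heart of the matter is $\mathrm{(i)}\Rightarrow\mathrm{(ii)}$, whose crux is to show that an isoparametric $\Sigma$ has constant angle. Writing $\Sigma=f^{-1}(c)$ locally with $f$ isoparametric and, after reparametrising, $|\nabla f|\equiv1$, the level hypersurfaces $\Sigma_r=f^{-1}(r)$ are the equidistants of $\Sigma$ and their common normal geodesics are the integral curves of $\nabla f$. Since $\pt$ is parallel in $\qr$, the function $\la\nabla f,\pt\ra=\pt f$ --- which on each $\Sigma_r$ is precisely its angle function --- is constant along each normal geodesic; thus $\nu$ is transported by the normal flow, and in particular $\Delta\nu=\Delta^{\Sigma_r}\nu$ on each leaf. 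As $\pt$ is Killing, $\Delta$ commutes with $\pt$, so $\Delta\nu=\pt(\Delta f)=a'(f)\,\nu$ where $\Delta f=a(f)$; comparing with the identity above gives $|A_r|^2=\epsilon(n-1)(1-\nu^2)-a'(r)$ on the open set $\{\nu\ne0\}$ of each leaf, so $|A_r|$ is pinned down by $\nu$. I would then differentiate the Riccati equation $\nabla_NA=A^2+\mathcal R$ along the normal flow --- in $\qr$ the Jacobi operator $\mathcal R$ is the rank $\le1$, $\nabla_N$-parallel perturbation $\mathcal R(X)=\epsilon\big((1-\nu^2)X-\la X,T\ra T\big)$ --- and use that the mean curvature of \emph{all} equidistants is constant on the respective leaves to constrain the power sums $\operatorname{tr}(A_r^{\,k})$ and the auxiliary quantities $\la A_r^{\,k}T,T\ra$ as functions of $\nu$ and $r$. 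Combining these constraints with a connectedness/continuation argument --- the Maximum--Continuation Principle --- I expect one can conclude that $|\nabla^\Sigma\nu|^2=\la A^2T,T\ra$ vanishes identically, i.e. $\nu$ is constant; this last deduction is where I anticipate the real work lies. Once $\nu$ is constant, $AT=-\nabla^\Sigma\nu=0$, so $T/|T|$ is a principal direction with curvature zero, the quantities $\la A_r^{\,k}T,T\ra$ vanish for $k\ge1$, the normal-flow recursion for $\operatorname{tr}(A_r^{\,k})$ closes up, and every power sum --- hence every principal curvature --- is seen to be constant on $\Sigma$, establishing $\mathrm{(ii)}$.

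For $\mathrm{(ii)}\Rightarrow\mathrm{(iii)}$, assume $\nu$ and the principal curvatures of $\Sigma$ constant and split into cases. If $\nu^2=1$ then $T=0$, so $\pt=\pm N$ along $\Sigma$, $\Sigma$ is an integral leaf of the horizontal distribution, hence an open subset of a slice $\mathbb{Q}_\epsilon^n\times\{t_0\}$: case (a). If $\nu=0$ then $\pt=T$ is tangent to $\Sigma$, so $\Sigma$ is invariant under vertical translations and equals $\widetilde M\times\R$ locally, where $\widetilde M=\Sigma\cap(\mathbb{Q}_\epsilon^n\times\{0\})$ has constant principal curvatures in $\mathbb{Q}_\epsilon^n$; by Cartan's characterisation in space forms $\widetilde M$ is an open subset of a complete isoparametric hypersurface of $\mathbb{Q}_\epsilon^n$: case (b). If $0<\nu^2<1$, then $AT=0$, $|T|^2=1-\nu^2$ is a nonzero constant, $\nabla_XT=\nu AX$, the distribution $T^\perp\cap T\Sigma$ is integrable (since $\la[X,Y],T\ra=\nu(\la AY,X\ra-\la AX,Y\ra)=0$), and the integral curves of $T$ are geodesics of both $\Sigma$ and $\qr$ making a fixed angle with $\pt$; using this warped-product-type structure together with the constancy of the principal curvatures one identifies the $\mathbb{Q}_\epsilon^n$-leaves as horospheres and concludes that $\Sigma$ is an open subset of a parabolic bowl --- which forces $\epsilon=-1$. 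Thus case (c) occurs only in $\h^n\times\R$, and the case $0<\nu^2<1$ is vacuous when $\epsilon=1$.

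Finally, for $\mathrm{(iv)}$--$\mathrm{(v)}$: a homogeneous hypersurface of a homogeneous space has constant principal curvatures, and since $\mathrm{Iso}(\qr)=\mathrm{Iso}(\mathbb{Q}_\epsilon^n)\times\mathrm{Iso}(\R)$ preserves $\pt$ up to sign, it also has constant $|\nu|$, hence constant $\nu$ by connectedness of $\Sigma$; therefore $\mathrm{(iv)}\Rightarrow\mathrm{(ii)}\Rightarrow\mathrm{(iii)}$. Conversely, in the hyperbolic case each model is (an open subset of) a homogeneous hypersurface: slices are orbits of $\mathrm{Iso}(\h^n)$; complete isoparametric hypersurfaces of $\h^n$ are all homogeneous by Cartan's classification, so vertical cylinders over them are orbits of a product group; and a parabolic bowl is an orbit of the $n$-dimensional subgroup of $\mathrm{Iso}(\h^n\times\R)$ obtained by coupling the horospherical translations and the geodesic translation toward the corresponding point at infinity with a vertical translation --- giving $\mathrm{(iii)}\Rightarrow\mathrm{(iv)}$. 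In the spherical case, complete isoparametric hypersurfaces of $\s^n$ need not be homogeneous (the Ozeki--Takeuchi and FKM examples), so $\mathrm{(iv)}$ is strictly weaker than $\mathrm{(i)}$--$\mathrm{(iii)}$; but by the above a homogeneous hypersurface of $\s^n\times\R$ is an open subset of a slice or of a vertical cylinder $M\times\R$, and $M\times\R$ is homogeneous iff $M$ is, which by the Hsiang--Lawson classification means $M$ is an open subset of a complete homogeneous hypersurface of $\s^n$ --- precisely $\mathrm{(v)}$. The principal obstacle throughout is the constancy of the angle function in $\mathrm{(i)}\Rightarrow\mathrm{(ii)}$; a secondary difficulty is the structural analysis of the case $0<\nu^2<1$, in particular ruling it out when $\epsilon=1$.
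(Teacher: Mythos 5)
Your overall architecture matches the paper's (the cycle $\mathrm{(i)}\Rightarrow\mathrm{(ii)}\Rightarrow\mathrm{(iii)}$, closed by homogeneity, with the constancy of the angle function as the crux), and your treatment of $\mathrm{(iv)}$--$\mathrm{(v)}$ and of the easy cases $\nu^2=1$ and $\nu=0$ is sound. But the proposal has a genuine gap exactly at the step you yourself flag: you never actually prove that an isoparametric hypersurface has constant angle. You set up the normal flow, note that $\nu$ is transported along normal geodesics, derive $|A_r|^2$ as a function of $\nu$ and $r$ from a Bochner identity, and then say that differentiating the Riccati equation and constraining the power sums $\operatorname{tr}(A_r^{\,k})$ and $\la A_r^{\,k}T,T\ra$ should eventually force $|\nabla^\Sigma\nu|^2=\la A^2T,T\ra\equiv 0$, adding that ``this last deduction is where I anticipate the real work lies.'' That deduction \emph{is} the theorem: it occupies Proposition~\ref{prop-isoparametric-->constantangle} together with the entire appendix. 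The paper's mechanism is concrete: it writes the Jacobi transition matrix $B(r)$ of the normal flow explicitly in the parallel frame adapted to $T$, obtains $D(r)=\det B(r)$ as a combination $\sum_\ell(\alpha_\ell+\beta_\ell r)s_\tau^\ell c_\tau^{n-1-\ell}$ with $\tau=-\epsilon(1-\theta^2)$, exploits the identity $D'+H(r)D=0$ to generate infinitely many linear relations among the $\alpha_{\ell,0},\beta_{\ell,0}$, and then shows (via the $\tau$-Kac matrix and generalized Vandermonde determinants) that solvability of the resulting system forces $\tau$ to satisfy a nontrivial polynomial equation, hence to be locally constant. Your Riccati/power-sum formulation is essentially the dual of this, and there is no a priori reason it must close up: without an argument of comparable depth, the claim that the constraints pin down $\la A^2T,T\ra=0$ is unsubstantiated.

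A secondary, smaller gap is in $\mathrm{(ii)}\Rightarrow\mathrm{(iii)}$ for $0<\nu^2<1$: you assert that the horizontal leaves must be horospheres and that this forces $\epsilon=-1$, but give no reason. The paper gets this from the $\rho$-function ODE $\rho'=\rho H^s+H$ (Lemma~\ref{lem-parallel}): since $\rho$ is a nonzero constant, $H^s$ must be independent of $s$, and among families of parallel isoparametric hypersurfaces of $\q_\epsilon^n$ only the horospheres of $\h^n$ have mean curvature independent of the parallel parameter --- in $\s^n$ no such family exists, which is what rules out $\epsilon=1$. You should supply this (or an equivalent) argument rather than assert the conclusion.
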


\begin{figure}[hbt]
 \centering
 \includegraphics[width=3cm]{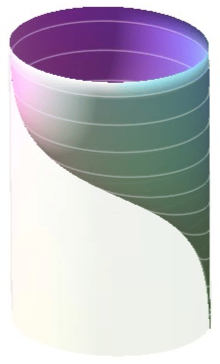}
 \caption{\small The depicted hypersurface, called a \emph{parabolic bowl},
 is a homogeneous entire vertical graph in $\h^n\times\R$
 whose level hypersurfaces
 are parallel horospheres, and whose vertical translations define a
 nonsingular isoparametric foliation of $\h^n\times\R$
 (we are grateful  to João P. dos Santos for this plot).}
 \label{fig-parabolichelicoid}
\end{figure}

The most delicate part of the proof of Theorem~\ref{thm-main}, which
we do in Proposition~\ref{prop-isoparametric-->constantangle},  is
showing that the connected isoparametric hypersurfaces of $\qr$ have
constant angle function. To accomplish that, we proceed as
Domínguez-Vázquez and Manzano in~\cite{dominguezvazquez-manzano}. More precisely,
we apply Jacobi field theory for reducing the proof to
the resolution of an algebraic problem.
In the $n$-dimensional setting, such problem is considerably
more involved than its $2$-dimensional analogue,
which compelled us to approach it differently.
Our trick then was to consider an alternate formulation on
which the corresponding
algebraic equations are all linear. In this way, the
solution became attainable,
although this linear problem were still arduous
(see Section~\ref{sec-appendix}).

For the remaining of the proof of Theorem~\ref{thm-main},
we apply some results obtained in~\cite{delima-roitman,tojeiro},
including the one that  characterizes
constant angle  hypersurfaces of
the products $\qr$ as horizontal slices,
vertical cylinders or vertical graphs built
on parallel hypersurfaces of $\mathbb Q_\epsilon^n$
(cf.~Sec.~\ref{subsec-graphs}).

Theorem~\ref{thm-main} provides  an
explicit classification of the isoparametric hypersurfaces
and of the homogeneous hypersurfaces of $\qr$, since such
classes of hypersurfaces
are completely classified in
$\mathbb Q_\epsilon^n$\footnote{Due to some controversial
results by Siffert~\cite{siffert1,siffert2},
there is no general agreement that the isoparametric hypersurfaces of
the sphere $\s^{13}$ are indeed classified.}.
The case $n=2$, of course,
is contained in the main result
by Domínguez-Vázquez and Manzano~\cite{dominguezvazquez-manzano}.
We included it here due to the fact that
our proof differs from theirs in some substantial parts. For $n=3$,
it was proved in~\cite{manfio} that  hypersurfaces
with constant principal curvatures have
constant angle. Considering this result, we can
drop the assumption on the
constancy of the angle in Theorem~\ref{thm-main}
when $n=3$. Finally, we remark that
Theorem~\ref{thm-main} also gives that all
isoparametric hypersurfaces of $\qr$ have constant
scalar curvature, since
the scalar curvature of a hypersurface of $\qr$
satisfies a relation (see, e.g.,
equality~(6.2) in~\cite{delima-ramos-santos})
which implies that any such hypersurface
having constant angle and constant principal
curvatures is necessarily of constant scalar curvature.

\vt
\noindent
{\bf Acknowledgments.} This work was partially
supported by INdAM-GNSAGA and PRIN 20225J97H5. It was conceived
during the time the first named author visited
the Department of Information Engineering, Computer Science and
Mathematics of Università degli Studi
dell’Aquila. He would like to deeply express his gratitude for
the warm hospitality provided by the faculty
of that institution.
The second named author was partially supported by INdAM - GNSAGA Project,
codice CUP E53C24001950001.


\section{Preliminaries}  \label{sec-preliminaries}

\subsection{Tensor Curvature of $\qr$}
Given a field  $X\in\mathfrak X(\qr)$,  we shall denote by $X^h$ its component which is
tangent to $\mathbb Q_\epsilon^n$ (called \emph{horizontal}), that is,
\[
X^h:=X-\la X,\pt\ra\partial_t,
\]
where $\partial _t$ denotes the gradient of the projection $\pi_{\scriptscriptstyle\R}$ of
$\qr$ on its second factor $\R.$ Note that $\partial_t$ is a unit parallel field on $\qr.$
When $X=X^h$, we say that the field $X$ is \emph{horizontal}.

Since the first factor of $\qr$ has constant sectional curvature $\epsilon$ and its
second factor  is one-dimensional, the  Riemannian curvature tensor $R_\epsilon$
of $\qr$ is
\begin{equation*}
R_\epsilon(X,Y)Z=\epsilon(\la X^h,Z^h\ra Y^h-\la Y^h,Z^h\ra X^h) \,\,\,\,\, \forall X, Y, Z\in\mathfrak X(\qr).
\end{equation*}

\subsection{Hypersurfaces of $\qr$}
Given an oriented hypersurface $\Sigma$ of $\qr$ (endowed with the
standard product metric),
set $N$ for its unit normal field  and $A$ for its shape operator  with respect to
$N,$ that is
\[
AX=-\overbar\nabla_XN,  \,\,\,  X\in \mathfrak X(\Sigma),
\]
where $\overbar\nabla$ is the Levi-Civita connection of $\qr,$
and $\mathfrak X(\Sigma)$ is the tangent bundle of $\Sigma$.
The principal curvatures of $\Sigma,$ i.e., the eigenvalues of
the shape operator
$A,$ will be denoted by $k_1\,, \dots ,k_n$. In this setting,
the non-normalized \emph{mean curvature} $H$ of  $\Sigma$
is defined as the sum of its principal curvatures, that is,
\[
H:=k_1+\cdots +k_n.
\]

The \emph{height function} $\phi$ and the \emph{angle function} $\Theta$
of $\Sigma$ are defined by
\[
\phi:=\pi_{\scriptscriptstyle\R}|_\Sigma, \quad \Theta:=\langle N,\partial_t\rangle.
\]

Denoting by $\nabla $ the gradient  on $C^\infty(\Sigma)$ and writing
$T:=\nabla\phi,$ the identity
\begin{equation*}
T=\partial_t-\theta N
\end{equation*}
holds everywhere on $\Sigma.$ In particular,
$\|T\|^2=1-\theta^2.$

\subsection{Graphs on parallel hypersurfaces} \label{subsec-graphs}
Let $\mathscr F:=\{M_s\subset\q_\epsilon^n\,;\, s\in I\}$
be a family of parallel hypersurfaces of $\q_\epsilon^n,$ where $I\subset\R$ is an open interval.
Given a smooth function $\phi$ on $I,$ let
$$f\colon M_{s_0}\times I\rightarrow\qr, \,\,\, s_0\in I,$$
be the immersion given by
\begin{equation*}
f(p,s):=(\exp_p(s\eta_{s_0}(p)),\phi(s)), \,\,\, (p,s)\in M_{s_0}\times I,
\end{equation*}
where $\exp$ denotes the exponential map of $\q_\epsilon^n,$ and $\eta_{s_0}$ is the unit
normal of $M_{s_0}.$ The hypersurface $\Sigma=f(M_{s_0}\times I)$ is a vertical graph
over an open set of $\q_\epsilon^n$ whose level hypersurfaces
are the parallels $M_s$ to $M_{s_0}.$

\begin{definition}
With the above notation, we call $\Sigma$ an $(M_s,\phi)$-\emph{graph} and say that
\begin{equation}\label{eq-rho}
\rho(s):=\frac{\phi'(s)}{\sqrt{1+(\phi')^2(s)}},  \,\,\, s\in I,
\end{equation}
is the $\rho$-\emph{function} of $\Sigma$.
\end{definition}

Given an $(M_s,\phi)$-graph $\Sigma$ in $\qr$, it is easily seen that
$N=-\rho(s)\eta_s(p)+\theta\partial_t$ is a unit normal to it.
In particular, one has that the equality
\begin{equation} \label{eq-theta}
\rho^2+\theta^2=1
\end{equation}
holds everywhere on  $\Sigma$.

It was proved in~\cite{delima-manfio-santos} that,
with this orientation, the principal curvature functions
$k_i=k_i(p,s)$ of  $\Sigma$
at a point $(p,s)\in M_{s_0}\times I$ are:
\begin{equation*}
k_i=-\rho(s)k_i^s(p), \,\,\, i\in\{1,\dots, n-1\}, \quad\text{and}\quad k_n=\rho'(s),
\end{equation*}
where $k_i^s(p)$ is the $i$-th principal curvature of the parallel $M_s$  at
$\exp_p(s\eta_{s_0}(p)).$

We point out that, from~\eqref{eq-rho}, one has
\[
\phi(s)=\int_{s_0}^{s}\frac{\rho(u)}{\sqrt{1-\rho^2(u)}}du+\phi(s_0), \,\,\, s_0, s\in I.
\]
Hence, an $(M_s,\phi)$ graph is determined by its $\rho$-function.


\section{Isoparametric hypersurfaces of constant angle} \label{sec-constantangle01}

In this section, we prove Propositions~\ref{prop-parabolichelicoid}
and~\ref{prop-constantangle}, which establish
the existence of the parabolic bowl as described in Figure~\ref{fig-parabolichelicoid}, and
its uniqueness --- together with horizontal slices and vertical
cylinders over isoparametric hypersurfaces of $\q_\epsilon^n$ --- as a hypersurface
of constant angle and constant principal curvatures in $\qr$. In the proofs,
the following two lemmas will play a crucial role. They first appeared in~\cite{tojeiro}, and
later in~\cite{delima-roitman} in a more general form. For notational purposes, we
refer to the corresponding results of~\cite{delima-roitman}.

\begin{lemma}[Theorem~7 of~\cite{delima-roitman}] \label{lem-parallel}
Let  $\{M_s\,;\, s\in I\}$ be a family of  isoparametric
hypersurfaces of \,$\q_\epsilon^n$. Consider  the first order
differential equation
\begin{equation}  \label{eq-ode}
y'=H^sy+H, \,\,\, s\in I,
\end{equation}
where $H^s$ denotes the mean curvature of $M_s$ and $H$ is a constant.
In this setting, if  $\rho\colon I\to(0,1)\subset\R$ is a solution to
\eqref{eq-ode},
then the $(M_s,\phi)$-graph  determined by $\rho$ has constant mean curvature $H$.
Conversely, if an $(M_s,\phi)$-graph $\Sigma$ has constant  mean curvature $H$,
then $\{M_s\,;\, s\in I\}$ is isoparametric and the $\rho$-function
of $\Sigma$ is a solution to~\eqref{eq-ode}.
\end{lemma}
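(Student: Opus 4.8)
The plan is to reduce both implications to the explicit description, recalled in Section~\ref{subsec-graphs}, of the principal curvatures of an $(M_s,\phi)$-graph. Fix a smooth function $\rho\colon I\to(0,1)$ and let $\Sigma$ be the $(M_s,\phi)$-graph it determines: by~\eqref{eq-rho} the function $\rho$ recovers $\phi$ up to an additive constant through $\phi'=\rho/\sqrt{1-\rho^2}$, and since the leaves $M_s$ are genuine hypersurfaces of $\q_\epsilon^n$ for every $s\in I$, the map $(p,s)\mapsto(\exp_p(s\eta_{s_0}(p)),\phi(s))$ is an immersion, so $\Sigma$ is a genuine immersed hypersurface. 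By the formula of~\cite{delima-manfio-santos}, its principal curvatures at $(p,s)$ are $k_i=-\rho(s)k_i^s(p)$ for $i\in\{1,\dots,n-1\}$ and $k_n=\rho'(s)$, where $k_i^s$ is the $i$-th principal curvature of $M_s$. Summing, and writing $H^s$ for the mean curvature of $M_s$ with the orientation making this consistent with the $k_i^s$, one gets
\[
H_\Sigma(p,s)=\rho'(s)-\rho(s)\sum_{i=1}^{n-1}k_i^s(p)=\rho'(s)-\rho(s)\,H^s(p).
\]
Both implications of the lemma follow from this identity.

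For the direct statement, assume $\{M_s\,;\,s\in I\}$ is isoparametric. By Cartan's theorem each $M_s$ has constant principal curvatures, so $H^s(p)$ does not depend on $p$; call it $H^s$, a function of $s$ alone. If $\rho\colon I\to(0,1)$ solves~\eqref{eq-ode}, i.e.\ $\rho'(s)=H^s\rho(s)+H$ with $H$ a constant, the identity above gives $H_\Sigma(p,s)=\rho'(s)-\rho(s)H^s=H$ for every $(p,s)$, so the $(M_s,\phi)$-graph determined by $\rho$ has constant mean curvature $H$.

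Conversely, let $\Sigma$ be an $(M_s,\phi)$-graph with constant mean curvature $H$, and let $\rho\colon I\to(0,1)$ be its $\rho$-function. The identity above now reads $\rho'(s)-\rho(s)H^s(p)=H$ on $M_{s_0}\times I$; fixing $s$ and dividing by $\rho(s)\neq 0$ yields $H^s(p)=(\rho'(s)-H)/\rho(s)$, independent of $p$. Thus every leaf $M_s$ has constant mean curvature, and this holds on the whole interval $I$. By Cartan's characterization of isoparametric hypersurfaces in space forms — equivalently, by analysing how the principal curvatures of the parallels $M_s$ evolve along the normal geodesics of $\q_\epsilon^n$ — the family $\{M_s\,;\,s\in I\}$ is therefore isoparametric. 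Finally, setting $H^s:=(\rho'(s)-H)/\rho(s)$, the common value of the mean curvature of $M_s$, the last display becomes $\rho'(s)=H^s\rho(s)+H$, so $\rho$ solves~\eqref{eq-ode}.

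Granting the principal-curvature formula of~\cite{delima-manfio-santos}, the computation is short; the two points to handle with care are (a) the bookkeeping of the orientation conventions for $\eta_s$, for $H^s$, and for the normal $N=-\rho(s)\eta_s(p)+\theta\partial_t$ of $\Sigma$, so that~\eqref{eq-ode} comes out with precisely the stated signs, and (b) the passage, in the converse, from ``every $M_s$ has constant mean curvature'' to ``$\{M_s\}$ is isoparametric'', which is exactly where the space-form structure of $\q_\epsilon^n$ enters.
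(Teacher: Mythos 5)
The paper does not prove this lemma at all: it is imported verbatim as Theorem~7 of~\cite{delima-roitman}, so there is no in-paper argument to compare yours against. Your derivation is the natural one and is correct: once you grant the principal-curvature formula $k_i=-\rho(s)k_i^s(p)$, $k_n=\rho'(s)$ recalled in Section~\ref{subsec-graphs}, summing gives $H_\Sigma=\rho'-\rho H^s$, and both implications follow by reading this identity in the two directions, with the signs matching~\eqref{eq-ode} exactly (as a sanity check, horospheres with the outward orientation have $H^s=-(n-1)$ and your identity reproduces~\eqref{eq-ode02}).

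One step deserves more justification than you give it. In the converse you write ``dividing by $\rho(s)\neq 0$'', but nothing in the stated hypotheses of the converse forces the $\rho$-function of an arbitrary constant-mean-curvature $(M_s,\phi)$-graph to be nowhere zero; if $\rho$ vanished on a subinterval $J$, the identity $\rho'-\rho H^s(p)=H$ would reduce to $\rho'=H$ there and would impose no constraint whatsoever on the leaves $M_s$, $s\in J$, so the conclusion that the family is isoparametric could not be extracted on $J$. The hypothesis $\rho\neq 0$ is implicit in the notion of $(M_s,\phi)$-graph (the level hypersurfaces of the height function are the parallels $M_s$ only when $\phi$ is strictly monotone), and in every application in the paper $\rho$ is a nonzero constant, so this is a matter of making an implicit nondegeneracy assumption explicit rather than a fatal flaw; but you should state it, since it is exactly the point where the converse could otherwise fail. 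If $\rho$ has only isolated zeros, a continuity argument ($H^s(p)$ independent of $p$ on a dense set of $s$, hence for all $s$) repairs the step. Your remaining appeal — that a family of parallels in a space form, all with constant mean curvature, is isoparametric — is fine, since that is precisely the definition of an isoparametric family once each $M_s$ has a neighborhood of parallels within $\{M_{s'}\}_{s'\in I}$.
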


\begin{lemma}[Corollary 4 of~\cite{delima-roitman}] \label{lem-constantangle}
Let $\Sigma$ be a connected hypersurface of \,$\qr$ whose angle function is constant.
Then, one of the following occurs:
\begin{itemize}[parsep=1ex]
\item $\Sigma$ is an open set of a horizontal slice $\q_\epsilon^n\times\{t_0\}$;
\item $\Sigma$ is a vertical cylinder over a hypersurface of \,$\q_\epsilon^n$;
\item $\Sigma$ is locally an $(M_s,\phi)$-graph {with $\phi'$ a nonzero constant}.
\end{itemize}
\end{lemma}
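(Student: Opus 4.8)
The plan is to argue by cases on the constant value of $\Theta^2$, where $\Theta=\la N,\pt\ra$ is the angle function. Throughout I would use the identities recorded in Section~\ref{sec-preliminaries}, namely $T=\pt-\Theta N$ and $\|T\|^2=1-\Theta^2$, together with $\nabla_XT=\Theta AX$ and $X(\Theta)=-\la AX,T\ra=-\la X,AT\ra$ for every $X\in\mathfrak X(\Sigma)$; the last two drop out of differentiating $\pt=T+\Theta N$ and recalling that $\pt$ is parallel in $\qr$. The crucial first observation is that, since $\Theta$ is constant, the second identity forces $AT=0$, and this will be the engine of the argument.

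The two extreme cases are immediate. If $\Theta^2\equiv1$, then $T=\nabla\phi\equiv0$, so the height function is constant on the connected set $\Sigma$; hence $\Sigma\subset\q_\epsilon^n\times\{t_0\}$ for some $t_0$, and being an immersion between $n$-manifolds it is an open subset of that slice. If $\Theta\equiv0$, then $N=N^h$ is horizontal, so $\pt$ is everywhere tangent to $\Sigma$ and its integral curves, the vertical lines $\{p\}\times\R$, remain in $\Sigma$; thus $\Sigma$ is (an open subset of) the vertical cylinder over the immersed hypersurface of $\q_\epsilon^n$ obtained by projecting $\Sigma$.

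The heart of the matter is the case $0<\Theta^2<1$. Now $\|T\|^2=1-\Theta^2$ is a positive constant, so $\phi$ has no critical point and its level sets $\Sigma_c=\phi^{-1}(c)$ are embedded hypersurfaces foliating $\Sigma$; since a vector tangent to $\Sigma_c$ is orthogonal to $T$, hence to $\pt=T+\Theta N$, each $\Sigma_c$ is horizontal and can be identified with an immersed hypersurface $M_c\subset\q_\epsilon^n$ whose unit normal $\eta_c$ is the normalization of $N^h=N-\Theta\pt$. Next I would feed in $AT=0$: it gives $\overbar\nabla_TT=\nabla_TT+\la AT,T\ra N=\Theta AT=0$, so the integral curves $\gamma$ of $T$ are geodesics of the product $\qr$, and therefore write as $\gamma(s)=(\sigma(s),\ell(s))$ with $\sigma$ a constant-speed geodesic of $\q_\epsilon^n$ and, since $\la\gamma',\pt\ra=\la T,\pt\ra=1-\Theta^2$, with $\ell(s)=c_0+(1-\Theta^2)s$ affine and strictly monotone. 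A direct computation gives $(\gamma')^h=T^h=-\Theta N^h$, whence $\sigma$ is the normal geodesic of $M_{c_0}$ issuing from the projection of $\gamma(0)$, run at the speed $|\Theta|\sqrt{1-\Theta^2}$, which is independent of the starting point. Letting that point range over $\Sigma_{c_0}\cong M_{c_0}$ and setting $\tau=-\Theta\sqrt{1-\Theta^2}\,s$, the flow of $T$ then furnishes, near any prescribed point, the parametrization $f(p,\tau)=(\exp_p(\tau\eta_{c_0}(p)),\psi(\tau))$ of $\Sigma$, with $\psi(\tau)=c_0-\tfrac{\sqrt{1-\Theta^2}}{\Theta}\,\tau$. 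In other words, $\Sigma$ is locally the $(M_\tau,\psi)$-graph over the parallel family $\{M_\tau\}$ of $M_{c_0}$ (genuine hypersurfaces for $|\tau|$ small), with $\psi'\equiv-\sqrt{1-\Theta^2}/\Theta$ a nonzero constant; this is exactly the third alternative.

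I expect the main obstacle to be this last case, and within it the step that converts the algebraic fact ``$\Theta$ constant $\Rightarrow AT=0$'' into geometry: one must recognize the $T$-trajectories as ambient geodesics, so that their horizontal projections become the normal geodesics of the level hypersurfaces, all traversed at one and the same speed; this is precisely what forces the projected level sets to form a parallel family and the profile function $\psi$ to be affine. The remaining ingredients — the case split, checking the $M_\tau$ are honest parallel hypersurfaces locally, and matching the resulting data with the definition of an $(M_s,\phi)$-graph from Subsection~\ref{subsec-graphs} — are routine.
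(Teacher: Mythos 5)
Your argument is correct. Note that the paper does not prove this lemma at all --- it is imported verbatim as Corollary~4 of the cited reference (originally from Tojeiro's paper) --- and your proof is essentially the standard one given there: constant $\Theta$ forces $AT=0$ via $X(\Theta)=-\langle X,AT\rangle$, whence the integral curves of $T$ are ambient geodesics whose horizontal projections are the normal geodesics of the (horizontal) level sets, traversed at the common speed $|\Theta|\sqrt{1-\Theta^2}$, which exhibits $\Sigma$ locally as a graph over a parallel family with affine profile. All the computations (e.g.\ $T^h=-\Theta N^h$, $\psi'=-\sqrt{1-\Theta^2}/\Theta$) check out and are consistent with \eqref{eq-rho}--\eqref{eq-theta}.
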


Now, we are in position to state and prove the announced
Propositions~\ref{prop-parabolichelicoid} and~\ref{prop-constantangle}.

\begin{proposition} \label{prop-parabolichelicoid}
Given $H\in(0,n-1)$, there exists an entire $(M_s,\phi)$-graph
$\Sigma_H$ in $\h^n\times\R$ (to be called a \emph{parabolic bowl}) of constant mean
curvature $H,$ which is homogeneous  and has constant angle. In particular,
$\Sigma_H$ is isoparametric and has constant principal curvatures.
\end{proposition}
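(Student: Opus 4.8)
The goal is to exhibit, for each $H\in(0,n-1)$, an entire $(M_s,\phi)$-graph $\Sigma_H$ in $\h^n\times\R$ with constant mean curvature $H$, constant angle, and (being a graph on a parallel family with constant $\rho'$) constant principal curvatures, and then to verify homogeneity. The natural family of parallels $\{M_s\}$ to build on is the family of \emph{parallel horospheres} of $\h^n$, since these are the isoparametric hypersurfaces of $\h^n$ whose mean curvature $H^s$ is a nonzero constant — indeed, with the appropriate orientation, a horosphere in $\h^n$ has all principal curvatures equal to $1$, so $H^s\equiv n-1$ for every $s\in I=\R$. I would first set up this family explicitly (say via horospheres centered at a fixed point at infinity, parametrized by signed distance $s$), record that $H^s\equiv n-1$, and then apply Lemma~\ref{lem-parallel}: the ODE \eqref{eq-ode} becomes the \emph{constant-coefficient linear} equation $\rho'=(n-1)\rho+H$.

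**Solving the ODE and checking the range condition.** The equilibrium solution is the constant $\rho\equiv \rho_0:=-\tfrac{H}{n-1}$ (or $+\tfrac{H}{n-1}$, depending on the sign convention for the horosphere orientation); in any case $|\rho_0|=\tfrac{H}{n-1}$. The hypothesis $H\in(0,n-1)$ is exactly what guarantees $\rho_0\in(0,1)$, so this constant function is an admissible solution $\rho\colon\R\to(0,1)$ in the sense of Lemma~\ref{lem-parallel}. Hence the $(M_s,\phi)$-graph $\Sigma_H$ determined by $\rho\equiv\rho_0$ has constant mean curvature $H$. Since $\rho$ is constant, formula~\eqref{eq-rho} gives $\phi'=\rho_0/\sqrt{1-\rho_0^2}$, a nonzero constant, so $\phi$ is an affine function of $s$ and $\Sigma_H$ is defined over all of $\h^n$ (the horospheres foliate $\h^n$ as $s$ ranges over $\R$), i.e. $\Sigma_H$ is an \emph{entire} vertical graph. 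Constancy of the angle is immediate from \eqref{eq-theta}: $\theta^2=1-\rho^2=1-\rho_0^2$ is constant. For the principal curvatures, the formulas recalled before Lemma~\ref{lem-parallel} give $k_i=-\rho_0 k_i^s=-\rho_0$ for $i\le n-1$ (using $k_i^s\equiv 1$ for horospheres) and $k_n=\rho'=0$; all constant, and summing gives $-(n-1)\rho_0=H$ as it must (up to the sign convention).

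**Homogeneity — the main point.** The remaining, and least mechanical, step is to show $\Sigma_H$ is homogeneous, i.e. that it is an orbit of a subgroup $G$ of isometries of $\h^n\times\R$. Here I would use that the stabilizer of a point at infinity $\xi\in\partial_\infty\h^n$ acts transitively on each horosphere centered at $\xi$ (this is the standard parabolic/horospherical homogeneity of $\h^n$), and that it maps the foliation by these horospheres to itself, translating the parameter $s$ by the induced action on the Busemann function. Extending such an isometry $g$ of $\h^n$ to $\h^n\times\R$ by coupling it with the vertical translation $t\mapsto t+(\phi(s+a)-\phi(s)) = t + \phi'\cdot a$ (well-defined precisely because $\phi$ is affine in $s$) produces isometries of $\h^n\times\R$ preserving $\Sigma_H$ and acting transitively on it. Concretely, one checks that the isometry $(p,t)\mapsto(g p, t+\phi'a)$ sends the point $f(p,s)=(\exp_p(s\eta(p)),\phi(s))$ to $f(gp, s+a)$, hence maps $\Sigma_H$ onto itself, and that choosing $g$ within the horosphere stabilizer realizes any prescribed displacement within a level and any shift $a$ of level; together these give transitivity. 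I expect this identification of the symmetry group — making precise the parabolic action of $\h^n$ and its compatible lift to the product — to be the crux; the ODE analysis and curvature computations are short. Finally, having constant angle and constant principal curvatures, $\Sigma_H$ is isoparametric: its parallels in $\h^n\times\R$ are the vertical translates, which form the nonsingular foliation mentioned in Figure~\ref{fig-parabolichelicoid} and all have the same (constant) mean curvature $H$.
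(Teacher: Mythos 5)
Your proposal is correct and follows essentially the same route as the paper: build the graph on the family of parallel horospheres, observe that the ODE of Lemma~\ref{lem-parallel} becomes constant-coefficient with the admissible equilibrium $\rho\equiv H/(n-1)\in(0,1)$, deduce constant angle from \eqref{eq-theta}, and obtain homogeneity from the stabilizer of the point at infinity (parabolic translations preserving each horosphere together with the normal flow coupled to a vertical translation). The only difference is cosmetic: the paper fixes the ``outward'' orientation $k_i^s=-1$ from the outset so that the constant solution lands in $(0,1)$ directly, whereas you leave the sign convention implicit and adjust afterwards.
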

\begin{proof}
Let $\mathscr F:=\{M_s\,;\, s\in\R\}$ be a family of parallel horospheres of $\h^n$. Then,
considering the ``outward orientation'' in each $M_s$, for all $s\in\R$ and all $p\in M_s$, we have that
$k_i^s(p)=-1$ for all $i\in{1,\dots, n-1}$. Therefore,
given $H\in(0,n-1)$,
the ODE~\eqref{eq-ode} associated to $\mathscr F$ and $H$ is
\begin{equation}  \label{eq-ode02}
y'=-(n-1)y+H.
\end{equation}
Clearly,
the constant function $\rho=H/(n-1)<1$ is a solution to~\eqref{eq-ode02}. Hence, by
Lemma~\ref{lem-parallel}, the entire $(M_s,\phi)$-graph $\Sigma_H$ of $\h^n\times\R$
which is determined by $\rho$ has constant mean curvature $H$. Moreover,
by~\eqref{eq-theta},
$\Sigma_H$ has constant angle function.

Finally, the homogeneity
of $\Sigma_H$ follows from its invariance by the one parameter group of
parabolic translations of $\h^n$ which fix each horosphere $M_s$ (extended slicewise to $\h^n\times\R$),
as well as by the isometries $\Psi_u\times\tau_u$, $u\in\R,$  where
$$\{\Psi_u\,;\, u\in\R\}\subset{\rm Iso}(\h^n)$$
is the flow defined by the unit normals of the horospheres $M_s$, and
$\tau_u$ is the vertical translation $(p,t)\mapsto (p,t+\rho u)$ of $\h^n\times\R.$
\end{proof}

When $n=2$, our parabolic bowl
$\Sigma_H$ corresponds to the surface $P_{H,-1,0}$
of~\cite{dominguezvazquez-manzano}, and to the surface
$P_H$ of~\cite{leite}. They also appear in~\cite{manfio}, for $n=3$, and
in~\cite{chaves}, for $n\ge 2$. In all of these occurrences,
the hypersurface is under no specific designation, except
in~\cite{dominguezvazquez-manzano}, where they are called
\emph{parabolic helicoids}.
Our chosen nomenclature comes from the extrinsic geometric flows
theory, for it was shown in~\cite{delima-pipoli} that, for each
$r\in\{1,\dots, n-1\}$, there exists a parabolic bowl
$\Sigma_{H_r}$ of constant $r$-th mean curvature $H_r$
such that $H_r=\theta$ on $\Sigma_{H_r}$. As a consequence,
$\Sigma_{H_r}$ is a translating soliton to mean curvature flow
of $r$-th order. In this context, entire graphs with this property
are called \emph{bowl solitons}.
We should also mention that
any parallel to a given parabolic bowl is nothing but a vertical translation of it.
Therefore, analogously to the slices $\h^n\times\{t\}$, $t\in\R$,
the family of all parallels to a parabolic bowl defines a nonsingular
isoparametric foliation of $\h^n\times\R$.

\begin{proposition} \label{prop-constantangle}
Let $\Sigma$ be a connected  hypersurface of \,$\qr$ with constant angle function.  Then,
$\Sigma$ is isoparametric if and only if it has constant principal curvatures.  If so,
$\Sigma$ is  an open set of one of the following hypersurfaces:
\begin{itemize}[parsep=1ex]
  \item[\rm (i)] a horizontal slice \,$\mathbb Q_\epsilon^n\times\{t_0\}$,
  \item[\rm (ii)] a vertical cylinder over a complete isoparametric hypersurface of \,$\mathbb Q_\epsilon^n$,
  \item[\rm (iii)] a parabolic bowl {if $\epsilon=-1$}.
  \end{itemize}
\end{proposition}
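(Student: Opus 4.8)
The plan is to feed $\Sigma$ into the trichotomy of Lemma~\ref{lem-constantangle} and to settle each of the three cases in turn, almost all the content lying in the last one.

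\emph{Slice and cylinder.} If $\Sigma$ is an open subset of a horizontal slice $\q_\epsilon^n\times\{t_0\}$, then it is totally geodesic, hence automatically isoparametric, of constant (indeed vanishing) principal curvatures, and of type (i). If $\Sigma$ is an open subset of a vertical cylinder $M\times\R$ over a hypersurface $M$ of $\q_\epsilon^n$, then its unit normal is horizontal, so its shape operator at $(p,t)$ has the principal curvatures of $M$ at $p$ together with a zero along $\partial_t$, and its local parallels in $\qr$ are the cylinders $M_s\times\R$ over the parallels $M_s$ of $M$ in $\q_\epsilon^n$. Consequently $\Sigma$ has constant principal curvatures iff $M$ does, and $\Sigma$ is isoparametric iff the family $\{M_s\}$ is; by Cartan's theorem (in a space form, a hypersurface is isoparametric iff it has constant principal curvatures) these two conditions on $M$ are equivalent, and when they hold $M$ is an open subset of a complete isoparametric hypersurface of $\q_\epsilon^n$, so that $\Sigma$ is of type (ii).

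\emph{Graph.} The remaining case is that $\Sigma$ is locally an $(M_s,\phi)$-graph with $\phi'$ a nonzero constant. After possibly reversing the orientation, its $\rho$-function is then a constant $\rho_0\in(0,1)$, so by the formulas of Section~\ref{subsec-graphs} its principal curvatures are $k_i=-\rho_0k_i^s$ for $i=1,\dots,n-1$ and $k_n=\rho'(s)=0$, where $k_i^s$ is the $i$-th principal curvature of the parallel $M_s$ of $\q_\epsilon^n$. If $\Sigma$ has constant principal curvatures, then (since $\rho_0\neq0$) the $M_s$ form a parallel family of isoparametric hypersurfaces of $\q_\epsilon^n$ all having the same constant principal curvatures; if instead $\Sigma$ is isoparametric, then it has constant mean curvature $H$, and Lemma~\ref{lem-parallel} shows that $\{M_s\}$ is isoparametric and that the constant $\rho_0$ solves $y'=H^sy+H$, so that $H^s\equiv-H/\rho_0$ does not depend on $s$. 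Either way one is reduced to the following space-form statement: \emph{a family of parallel isoparametric hypersurfaces of $\q_\epsilon^n$ whose mean curvature is constant along the family occurs only for $\epsilon=-1$, and then only as a family of horospheres.} I would establish this from the Riccati equation $\frac{d}{ds}A^s=(A^s)^2+\epsilon\,{\rm Id}$ obeyed by the shape operators of a parallel family: its trace gives $\frac{d}{ds}H^s=\|A^s\|^2+\epsilon(n-1)$, so constancy of $H^s$ forces $\epsilon=-1$ and $\|A^s\|^2\equiv n-1$; a short further computation with the resulting eigenvalue equation $\lambda_i'=\lambda_i^2-1$ then forces $\lambda_i^2\equiv1$ for every $i$; and Cartan's fundamental formula excludes two distinct such values, so all $\lambda_i$ coincide and each $M_s$ is an open subset of a horosphere.

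Granting this, $\epsilon=-1$, the $M_s$ are horospheres with outward principal curvatures $k_i^s\equiv-1$, whence $k_i\equiv\rho_0$ for $i<n$, $k_n\equiv0$, and $H=(n-1)\rho_0\in(0,n-1)$. Since an $(M_s,\phi)$-graph is determined by its $\rho$-function (Section~\ref{subsec-graphs}), $\Sigma$ coincides locally with the parabolic bowl $\Sigma_H$ of Proposition~\ref{prop-parabolichelicoid}, and being connected it is an open subset of that entire hypersurface, which is isoparametric and of constant principal curvatures by Proposition~\ref{prop-parabolichelicoid}; thus $\Sigma$ is of type (iii). This closes the graph case and, together with the other two, proves both the asserted equivalence and the classification into types (i)--(iii). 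The main obstacle is precisely this graph case, and within it the space-form fact identifying families of horospheres as the only parallel isoparametric families of constant mean curvature.
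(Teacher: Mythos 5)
Your proposal is correct and follows the same skeleton as the paper's proof: both feed $\Sigma$ into the trichotomy of Lemma~\ref{lem-constantangle}, dispose of the slice and cylinder cases by reducing to Cartan's equivalence (isoparametric $\Leftrightarrow$ constant principal curvatures) in $\q_\epsilon^n$, and in the graph case use the constancy of $\rho$ together with Lemma~\ref{lem-parallel} (or the explicit principal curvature formulas) to conclude that $\{M_s\}$ is an isoparametric family whose mean curvature $H^s=-H/\rho_0$ is independent of $s$. The one place where you genuinely diverge is the final identification of such a family: the paper simply invokes the classification of isoparametric hypersurfaces of space forms to conclude that $\{M_s\}$ must be a family of parallel horospheres in $\h^n$, whereas you derive this from first principles via the Riccati equation --- the trace identity $\tfrac{d}{ds}H^s=\|A^s\|^2+\epsilon(n-1)$ forces $\epsilon=-1$ and $\|A^s\|^2\equiv n-1$, the power-sum bootstrap from $\lambda_i'=\lambda_i^2-1$ forces $\lambda_i^2\equiv 1$, and Cartan's fundamental formula rules out the simultaneous presence of $+1$ and $-1$. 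This buys a more self-contained and elementary argument (you never need the full list of isoparametric hypersurfaces of $\h^n$ or $\s^n$, only Cartan's identity), at the cost of a slightly longer computation; the "short further computation" deserves to be written out (e.g., all even power sums of the $\lambda_i$ are forced to equal $n-1$, whence $|\lambda_i|\le 1$ and then $\lambda_i^2=1$), but it does go through. Your extra care with the sign of $\rho_0$ and with reading off the conclusion in the constant-principal-curvature branch without passing through the mean curvature are harmless refinements of what the paper does.
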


\begin{proof}
By Lemma~\ref{lem-constantangle}, $\Sigma$ is an open set of a horizontal slice, a vertical cylinder
over a hypersurface of $\mathbb Q_\epsilon^n$, or is locally an $(M_s,\phi)$-graph such that $\phi'$ is constant.
If the first occurs, we are done. Assume then that $\Sigma$ is a cylinder $\Sigma_0\times\R$, where
$\Sigma_0$ is a hypersurface of $\mathbb Q_\epsilon^n$.  It is easily seen that
$\Sigma$ is isoparametric (resp. has constant principal curvatures) if and only if $\Sigma_0$ is isoparametric
(resp. has constant principal curvatures). However, in $\q_\epsilon^n,$
to be isoparametric and to have constant principal curvatures are equivalent conditions. Besides,
we have from the classification of isoparametric hypersurfaces of space forms that
any isoparametric hypersurface of $\q_\epsilon^n$ is necessarily an open set of a complete
isoparametric hypersurface.

Let us suppose now that $\Sigma$ is locally an $(M_s,\phi)$-graph of $\qr$, $s\in I$,
such that $\phi'$ is a nonzero constant. Then, from~\eqref{eq-rho},
the $\rho$-function of $\Sigma$ is a nonzero constant as well.
If either $\Sigma$ is isoparametric or has
constant principal curvatures, then its mean curvature $H$ is constant.
In this case, it follows from Lemma~\ref{lem-parallel} that the hypersurfaces
$M_s$ are isoparametric
and that $\rho$ satisfies
$0=\rho'=\rho H^s+H,$
which implies that the mean curvature $H^s$ of $M_s$
is a constant independent of $s$. Again, by considering the classification of
isoparametric hypersurfaces
of $\q_\epsilon^n$, one concludes that
$\{M_s\,;\, s\in I\}$ is necessarily a family of parallel horospheres of $\h^n$, so that
$\Sigma$ is an open set of a parabolic {bowl} of $\h^n\times\R$.
\end{proof}

\begin{remark}
The first part of Proposition~\ref{prop-constantangle} is essentially the content
of Corollary~5.2
of~\cite{chaves}. Nonetheless, the proof which is given there is rather distinct from ours.
\end{remark}


\section{The constancy of the angle function}  \label{sec-constantangle02}

This section will be entirely dedicated to the proof of the following proposition,
which constitutes our main result for establishing Theorem~\ref{thm-main}.

\begin{proposition} \label{prop-isoparametric-->constantangle}
The angle function of  any connected isoparametric hypersurface of \,$\qr$ is constant.
\end{proposition}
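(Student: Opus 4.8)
The plan is to follow the strategy of Domínguez-Vázquez and Manzano, reducing the constancy of the angle function $\theta$ to an algebraic problem via Jacobi field theory. Let $\Sigma$ be a connected isoparametric hypersurface of $\qr$, with unit normal $N$, angle function $\theta=\la N,\pt\ra$, and tangential part $T=\pt-\theta N$. Since $\Sigma$ is isoparametric, its mean curvature $H$ is constant and so is the mean curvature of each nearby parallel hypersurface $\Sigma_r=\{\exp_p(rN(p)) : p\in\Sigma\}$. First I would set up the normal exponential map $F(p,r)=\exp_p(rN(p))$ and express the shape operator $A^r$ of $\Sigma_r$ in terms of solutions of the Jacobi/Riccati equation along the geodesics $r\mapsto F(p,r)$. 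Because the curvature tensor $R_\epsilon$ of $\qr$ has the explicit form displayed in the Preliminaries, the Jacobi equation decouples according to the splitting of the tangent space relative to $\pt$, and the relevant solutions can be written explicitly in terms of $\epsilon$, $\theta(p)$, and trigonometric/hyperbolic functions of $r$. The point is that the trace of $A^r$, being the (constant) mean curvature $H^r$ of $\Sigma_r$, must be independent of $p$ for every small $r$.

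Next I would extract from the constancy of $H^r$ in $p$ a family of equations, one for each power of $r$ (or each coefficient in the relevant expansion, e.g. in $\cos$, $\sin$, $\cosh$, $\sinh$, and products thereof). Following the excerpt's own description, the $2$-dimensional treatment of Domínguez-Vázquez--Manzano leads to polynomial equations that become unwieldy for general $n$; the trick advertised in the introduction is to recast the problem so that all the resulting equations are \emph{linear}. Concretely, I would introduce as unknowns the elementary symmetric functions (or power sums) of the principal curvatures $k_1,\dots,k_n$ of $\Sigma$ together with the quantities $\la AT,T\ra$, $\|AT\|^2$, $\la A^2T,T\ra$, and $\theta$, $\|T\|^2=1-\theta^2$; the curvature term contributes only through $\la X^h,Z^h\ra = \la X,Z\ra - \la X,T\ra\la Z,T\ra/\|T\|^2$-type expressions (on the region $T\ne 0$), so the Jacobi solutions are at worst affine in these curvature-weighted data. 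Differentiating the identity ``$H^r$ is constant on $\Sigma$'' and separating the independent functions of $r$ then yields a linear system whose solvability forces relations among $\theta$ and the curvature data; in particular one aims to show $\g\,\theta=0$, i.e. $\theta$ is locally constant, hence constant by connectedness.

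The main obstacle, as the authors themselves flag, is precisely this linear-algebraic step: even after linearization the system is large and its analysis is delicate, because one must treat the locus $T=0$ (where $\theta=\pm1$, and $\Sigma$ is tangent to a slice) separately from the open set where $T\ne 0$, and one must rule out the possibility of a nonconstant $\theta$ sustained by a carefully balanced configuration of principal curvatures. I would handle $\{T=0\}$ by noting it is closed; on its complement the frame adapted to $T/\|T\|$ and the eigendirections of $A$ restricted to $T^\perp$ is available, and the structure equations (Codazzi and Gauss for $\Sigma\subset\qr$, using the explicit $R_\epsilon$) give the evolution of $\theta$ along $T$: one has $\nabla_X T = \theta AX$ and $X(\theta)=-\la AT,X\ra$, which already pins down $\g\,\theta = -AT$. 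The crux is then showing $AT=0$ on the open dense set, equivalently that $T$ is a principal direction with vanishing principal curvature, and this is where the Jacobi-field equations for the parallel family — encoding that \emph{all} parallels are CMC, not just $\Sigma$ — must be fed in. I expect the bulk of the work (deferred in the paper to Section~\ref{sec-appendix}) to be the verification that the linear system admits only the solution $AT=0$; once that is in hand, $\theta$ is constant, and Lemma~\ref{lem-constantangle} together with Proposition~\ref{prop-constantangle} completes the classification.
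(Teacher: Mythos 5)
Your setup coincides with the paper's: parallel hypersurfaces via the normal exponential map, Jacobi fields decoupled by the explicit form of $R_\epsilon$, and the isoparametric hypothesis fed in through the mean curvatures of \emph{all} the parallels $\Sigma_r$. The identity $\g\,\theta=-AT$ is also correct, so constancy of $\theta$ is indeed equivalent to $AT=0$. The gap lies in how you propose to close the argument. You expect the algebraic step to show that the linear system ``admits only the solution $AT=0$,'' i.e.\ a pointwise conclusion about the shape operator. This cannot work: in the paper's formulation the unknown vector $x_0=(\alpha_{1,0},\dots,\beta_{n-1,0})$, built from the shape operator at the given point, is a solution of the system \eqref{eq-linearsystem} \emph{by construction}, for an arbitrary hypersurface --- the equations record identities satisfied by the derivatives of $D(r)=\det B(r)$ at $r=0$, not constraints that could fail at a point. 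No pointwise restriction on $A$ (such as $AT=0$) can be extracted from them. What the paper extracts instead is a \emph{compatibility} condition: the coefficient matrix $M$ is singular (Proposition~\ref{prop-mainlinear}), so existence of the solution $x_0$ forces $\det M_j=0$ for every $j$, and for a suitable $j_*$ the quantity $\det M_{j_*}$ is a nontrivial polynomial in $\tau=-\epsilon(1-\theta^2)$ whose coefficients are the constants $d_k$ --- constant precisely because $\Sigma$ is isoparametric, so the $r$-derivatives of $H(r)$ at $r=0$ do not depend on the footpoint. Hence $\tau$ ranges over the finite zero set of a fixed polynomial and is locally constant by continuity. This endgame is essentially different from yours, and your version has no mechanism to produce $AT=0$.

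Two further points. First, your proposed unknowns (elementary symmetric functions of the $k_i$ together with $\la AT,T\ra$, $\|AT\|^2$, $\la A^2T,T\ra$, \dots) enter $\det B(r)$ polynomially, not linearly, so the linearity you invoke would be lost; the paper's trick is to take as unknowns the coefficients $\alpha_{\ell,0},\beta_{\ell,0}$ of $D(r)$ in the basis $\{s_\tau^\ell c_\tau^{n-1-\ell},\ r\,s_\tau^\ell c_\tau^{n-1-\ell}\}$, with respect to which the recursion of Lemma~\ref{lem-reduction} is genuinely linear. Second, the heart of the matter --- proving that $M$ (resp.\ $M(s_*)$ for $n$ odd) has rank exactly $2n-2$ and that some $\det M_{j}$ is a \emph{nonzero} polynomial in $\tau$, which the paper does via the $\tau$-Kac matrix and generalized Vandermonde determinants, plus the analyticity argument needed when all relevant $d_k$ vanish --- is left entirely unaddressed in your plan, so even granting the correct endgame the proposal remains a programme rather than a proof.
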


\begin{proof}[Proof of Proposition~\ref{prop-isoparametric-->constantangle}]
Let $\Sigma$ be a connected oriented isoparametric hypersurface of $\qr$.
Since its angle function $\theta$ is continuous, it suffices to prove that
$\theta$ is locally constant on $\Sigma$. We can assume that
$\theta^2\ne1$, so that the gradient $T$ of the height function of $\Sigma$
never vanishes. In this setting, define
$\Phi^r\colon\Sigma\to\qr$ by
\[
\Phi^r(p)=\exp_p(rN_p),
\]
where $N_p$ is the unit normal field of $\Sigma$
at $p\in\Sigma$ and $\exp$ stands for
the exponential map of $\qr$. Passing to an open subset of
$\Sigma$, we can assume that, for a
small $\delta>0,$ and for all $r\in(-\delta,\delta),$
the map $\Phi^r$ is well defined and $\Sigma_r:=\Phi^r(\Sigma)$ is an embedded hypersurface
of $\qr$ which lies at distance $|r|$ from $\Sigma$.

Given $p\in\Sigma$,
let $\gamma_p(r)$ be the geodesic of $\qr$ such that
$\gamma_p(0)=p$ and $\gamma_p'(0)=N_p$, that is,
$\gamma_p(r)=\Phi^r(p)$. It is easily seen that
the unit normal to $\Sigma^r$ at $\gamma_p(r)$ is
$N(r):=\gamma_p'(r)$. In particular,
$N(r)$ is parallel along $\gamma_p$. Since
$\partial_t$ is parallel on $\qr$, this gives that
the angle function of $\Sigma_r$ is constant along
$\gamma_p$. Consequently, the gradient of the height function
of $\Sigma_r$ at $\gamma_p(r)$ is the parallel transport
$T(r)$ of $T$ along $\gamma_p$.

Set  $U_1(r)=T(r)/|T(r)|$ and let $N(r),U_1(r),U_2(r),\dots,U_n(r)$
be an orthonormal parallel frame along $\gamma_p$.
Notice that, for all $i\in\{2,\dots, n\}$, $U_i(r)$ is horizontal. Indeed, for such an $i$,
$0=\langle U_i(r),T(r)\rangle=\langle U_i,\partial_t\rangle.$

For any $j\in\{1,\dots,n\}$, let $\zeta_j=\zeta_j(r)$ be the Jacobi field along $\gamma_p$ such
that
\begin{equation} \label{eq-initialconditions}
\zeta_j(0)=U_j(0) \quad\text{and}\quad \zeta'_j(0)=-AU_j(0),
\end{equation}
where $A$ is the shape operator of $\Sigma$.
Then, for any such $j$, $\zeta_j$ satisfies
\begin{equation} \label{eq-jacobi}
\zeta_j''+R_\epsilon(\gamma_p',\zeta_j)\gamma_p'=0.
\end{equation}
In addition, for all  $r\in(-\delta,\delta),$
one has
$\la\zeta_j(r),N(r)\ra=0$.  Therefore,
there exist smooth functions $b_{ij}=b_{ij}(r)$ such that
\begin{equation} \label{eq-zetacoordinates}
\zeta_j=\sum_{i=1}^nb_{ij}U_i, \,\,\, j\in\{1,\dots, n\}.
\end{equation}
Furthermore, since all $U_i$ are parallel along
$\gamma_p$, we have
\begin{equation}\label{eq-zeta''}
\zeta''_j=\sum_{i=1}^nb_{ij}''U_i, \,\,\, j\in\{1,\dots, n\}.
\end{equation}

Now, aiming the Jacobi equation~\eqref{eq-jacobi}, we compute
\begin{equation}  \label{eq-jacobi02}
R_\epsilon(\gamma_p', \zeta_j)\gamma_p'=R_\epsilon(N,\zeta_j)N=
\epsilon(\la N^h,N^h\ra \zeta_j^h-\la \zeta_j^h,N^h\ra N).
\end{equation}
However, $N^h=N-\theta\partial_t,$ so that $\la N^h,N^h\ra=1-\theta^2=\|T\|^2$.
Besides, considering~\eqref{eq-zetacoordinates} and the fact  that $U_i$ is horizontal for all $i\geq 2$, we have:
\[
\zeta_j^h=\zeta_j-\la\zeta_j,\partial_t\ra\partial_t=\zeta_j-b_{1j}\la U_1,\partial_t\ra\partial_t=\zeta_{j}-b_{1j}\|T\|\partial_t.
\]
In particular,
\[
\la \zeta_j^h,N^h\ra=\la\zeta_j-b_{1j}\|T\|\partial_t,N-\theta\partial_t\ra=-\theta\la\zeta_j,\partial_t\ra=-\theta b_{1j}\|T\|.
\]

It follows from~\eqref{eq-jacobi02} and the above equalities that,
\begin{eqnarray*}
\epsilon R_\epsilon(\gamma_p',\zeta_j)\gamma_p' &=&\|T\|^2\zeta_j-b_{1j}\|T\|\partial_t+\theta b_{1j}\|T\|N=
\|T\|^2\zeta_j-b_{1j}\|T\|T\\
&=& \|T\|^2(\zeta_j-b_{1j}U_1)=\sum_{i=2}^{n}\|T\|^2b_{ij}U_i.
\end{eqnarray*}
This last equality and~\eqref{eq-jacobi} then give
\begin{equation} \label{eq-zeta''02}
\zeta_j''=\sum_{i=2}^{n}-\epsilon\|T\|^2b_{ij}U_i.
\end{equation}

Now, set $(a_{ij})$ for the (symmetric) matrix of $A$ with respect to
the orthonormal basis $\{U_1(0),\dots, U_n(0)\}$, that is,
$$AU_j(0)=\sum_{i=1}^na_{ij}U_i(0).$$

Considering this last equality and comparing~\eqref{eq-zeta''} with~\eqref{eq-zeta''02}, we conclude
that $\zeta_j$ is a Jacobi field satisfying the initial conditions~\eqref{eq-initialconditions}
if and only if the coefficients
$b_{ij}$ are solutions of a initial value problem. Namely,
\begin{equation}\label{eq-IVP}
\left\{\begin{array}{rcll}
b_{1j}''(r)&=&0
;\\[1ex]
b_{ij}''(r)&=&-\epsilon\|T\|^2b_{ij}(r),&\text{if}\ i\geq 2;
\\[1ex]
b_{ij}(0)&=&\delta_{ij}
;\\[1ex]
b_{ij}'(0)&=&-a_{ij}
.
\end{array}\right.
\end{equation}

Defining the function
$$\tau:=-\epsilon\|T\|^2=-\epsilon(1-\theta^2)\ne 0,$$
the solutions of~\eqref{eq-IVP} are
\begin{equation}\label{eq-solutions}
\left\{\begin{array}{rcll}
b_{1j}(r)&=&\delta_{1j}-a_{1j}r
;\\[1ex]
b_{ij}(r)&=&\delta_{ij}c_\tau(r)-a_{ij}s_\tau(r)&\text{if}\ i\geq 2
;
\end{array}\right.
\end{equation}
where $s_\tau$ and $c_\tau$ are the functions:
\begin{equation}\label{def-sc}
s_\tau(r):=\left\{
\begin{array}{lcc}
\frac{1}{\sqrt{\tau}}\sinh(\sqrt\tau\, r) & \text{if} & \tau>0,\\[1ex]
\frac{1}{\sqrt{-\tau}}\sin(\sqrt{-\tau}\, r)  & \text{if} & \tau<0,
\end{array}
\right.
\quad
c_\tau(r):=\left\{
\begin{array}{lcc}
\cosh(\sqrt\tau\, r) & \text{if} & \tau>0,\\[1ex]
\cos(\sqrt{-\tau}\, r)  & \text{if} & \tau<0.
\end{array}
\right.
\end{equation}

Notice that the derivatives of  $s_\tau$ and $c_\tau$
satisfy:
\begin{equation} \label{eq-s'&c'}
s_\tau'(r)=c_\tau(r) \quad\text{and}\quad c_\tau'(r)=\tau s_\tau(r) \,\,\, \forall r\in\R.
\end{equation}

Given $r\in(-\delta,\delta),$ let
$B(r)$ and $C(r)$ be the linear operators of $T_{\gamma_p(r)}\Sigma_r$ which take
the basis $\{U_1(r), \dots, U_n(r)\}$ to
$\{\zeta_1(r), \dots, \zeta_n(r)\}$ and $\{\zeta_1'(r), \dots, \zeta_n'(r)\},$
respectively. Considering~\eqref{eq-zetacoordinates} and the fact that each
$U_i$ is parallel along $\gamma_p$, we conclude that their matrices with respect to this basis are
$$B(r)=(b_{ij}(r))\quad\text{and}\quad C(r)=(b_{ij}'(r)), \,\,\, i,j\in\{1,\dots, n\},$$
where $b_{ij}$ are the functions defined in~\eqref{eq-solutions}.

In the above setting, Jacobi field theory applies, giving that
$B(r)$ is nonsingular for each $r\in(-\delta,\delta)$, and that the
shape operator  of $\Sigma^r$ is $A^r=-C(r)B(r)^{-1}$ (see~\cite[Theorem 10.2.1]{olmos}).
In particular,
\[
H(r)={\rm trace}\,A^r=-{\rm trace}\,(C(r)B(r)^{-1})=-\frac{\frac{d}{dr}(\det B(r))}{\det B(r)},
\]
where, in the last equality, we used the fact that $\nabla\det (B)=(\det B){\rm adj}(B^{-1}).$

Defining $D(r)=\det B(r)$,  it follows from the above that the function
\[
f(r)=D'(r)+H(r)D(r)
\]
vanishes identically.
Since $D'=f-HD=-HD$, one has
\[
f'=D''+H'D+HD'=D''+(H'-H^2)D.
\]
Therefore,  for all $k\in\N$,
\begin{equation} \label{eq-fderivatives}
0=f^{(k)}=D^{(k+1)}+\phi_kD, \quad \phi_k=\phi_k(H,H',\dots, H^{(k)}),
\end{equation}
where $f^{(k)}$ denotes the $k$-th derivative of $f$.

Now, considering that the functions $b_{ij}(r)$ are as in~\eqref{eq-solutions},
we decompose the matrix $B(r)=(b_{ij}(r))$ in blocks as
\begin{equation}\label{eq-B(r)}
B(r)=
\left[\begin{array}{c|c}
1-a_{11}r& \begin{array}{ccc}
-a_{12}r&\dots&-a_{1n}r
\end{array}\\
\hline
\begin{array}{c}
-a_{12}s_\tau(r)\\
\vdots\\
-a_{1n}s_\tau(r)
\end{array}& \delta_{ij}c_\tau(r)-a_{ij}s_\tau(r)
\end{array}\right].
\end{equation}

Expanding $D(r)=\det B(r)$ with respect to the first row of $B(r)$ and considering the equalities~\eqref{eq-s'&c'}, one can easily
prove by induction that, for any integers $n\ge 2$ and  $k\ge 1$,
and for any $\ell\in\{0,\dots, n-1\}$, there are coefficients $\alpha_{\ell,k}, \beta_{\ell,k}$,
which do not depend on $r$,  such that
\begin{equation}\label{eq-Dderivatives}
D^{(k)}(r)=\sum_{\ell=0}^{n-1}(\alpha_{\ell,k}+\beta_{\ell,k}r)s_\tau^\ell(r)c_\tau^{n-1-\ell}(r).
\end{equation}

Taking the first $2n-1$ derivatives of the (constant) function $f$ at $r=0$
and using~\eqref{eq-fderivatives} and~\eqref{eq-Dderivatives}, we conclude that
\begin{equation}  \label{eq-fderivativesagain}
0=f^{(k)}(0)=\alpha_{0,k+1}-d_k \,\,\, \text{for any}\ k\in\{1,\dots, 2n-1\},
\end{equation}
where $d_k=-\phi_k(H(0),H'(0),\dots, H^{(k)}(0)).$
In addition, as shown in Lemma~\ref{lem-reduction} of the appendix,
the coefficients $\alpha_{\ell,k}$ and $\beta_{\ell,k}$ satisfy recursive equations which
allow us to express each of them
in terms of $\alpha_{\ell,0}$ and $\beta_{\ell,0}.$
In this way, for any $k\in\N$, after $k+1$ steps,
we can write  $\alpha_{0,k+1}$ as the  linear combination
\begin{equation} \label{eq-linearcombination}
\alpha_{0,k+1}=\sum_{\ell=0}^{n-1}(p_{k+1,\ell}\alpha_{\ell,0}+q_{k+1,\ell}\beta_{\ell,0}),
\end{equation}
where the coefficients $p_{k+1,\ell}$ and $q_{k+1,\ell}$
depend only on $k$, $n$, $\ell$, and $\tau$. Moreover, we have that $\alpha_{0,0}=D(0)=1$ and,
by~\eqref{eq-fderivativesagain}, that $\alpha_{0,k+1}$ coincides with the constant $d_k$.
Therefore,  the vector
\[
x_0=(\alpha_{1,0},\dots,\alpha_{n-1,0},\beta_{0,0},\dots, \beta_{n-1,0})
\]
is a solution to the linear system
\begin{equation} \label{eq-linearsystem}
Mx=P, \quad x\in\R^{2n-1},
\end{equation}
whose augmented matrix $[M|P]$ has vector rows
$L_1, \dots, L_{2n-1}\in\R^{2n}$, where
\[
L_k:=(p_{k+1,1}, p_{k+1,2},\dots, p_{k+1,n-1},q_{k+1,0},q_{k+1,1},\dots ,q_{k+1,n-1},d_k-p_{k+1,0}).
\]

In what follows, by means of  a thorough analysis of the system~\eqref{eq-linearsystem}, we shall show that
$\tau=\tau(p)$ is necessarily a root of an algebraic equation, and so $\tau$ must be constant on $\Sigma$.
To that end, it will be convenient to consider first the cases $n=2$ and $n=3$.

\vspace{.1cm}
\noindent
{\bf Case 1: $n=2$.}
\vspace{.1cm}

As we mentioned before, this case was considered in~\cite{dominguezvazquez-manzano}.
We include it here to better illustrate our strategy, which is distinct from the one employed there.

For $n=2$,  the equalities of Lemma~\ref{lem-reduction} in the appendix yield
\begin{itemize}[parsep=1ex]
\item $\alpha_{0,2}=2\beta_{1,0}+\tau\alpha_{0,0}$;
\item $\alpha_{0,3}=\tau\alpha_{1,0}+3\tau\beta_{0,0}$;
\item $\alpha_{0,4}=4\tau\beta_{1,0}+\tau^2\alpha_{0,0}$.
\end{itemize}

These  equalities imply that
the augmented matrix $[M|P]$ of the system~\eqref{eq-linearsystem} is
\[
[M|P]=\left[
\begin{array}{ccc|c}
0 & 0 & 2 & d_1-\tau\\
\tau & 3\tau & 0 & d_2\\
0 & 0 & 4\tau & d_3-\tau^2
\end{array}
\right],
\]
where $d_i=-\phi_i(0)$, $i\in\{1,2,3\}.$ However,
it is easily seen that $\det M=0$. Hence, denoting
by $M_j$ the matrix obtained from $M$ by replacing its $j$-th column with $P$,
we have that
$\det M_j=0$ for all
$j\in\{1,2,3\}$. Otherwise, by Crammer's rule, the system $Mx=P$ would have no
solution, thereby contradicting the existence of the solution $x_0=(\alpha_{1,0},\beta_{0,0},\beta_{1,0}).$
In particular, we have
\begin{equation} \label{eq-n=2}
0=\det M_2=2\tau^3-4d_1\tau^2+2d_3\tau,
\end{equation}
so that $\tau$ is a root of a polynomial equation. This proves
Proposition~\ref{prop-isoparametric-->constantangle} for $n=2$.

\vspace{.1cm}
\noindent
{\bf Case 2: $n=3$.}
\vspace{.1cm}

Firstly, we point out that, as shown in Proposition~\ref{prop-coeff3} in the appendix, the following
equalities hold:
\begin{itemize}[parsep=1ex]
\item $L_{2s-1}=(0,2^{2s-1}\tau^{s-1},0,s2^{2s-1}\tau^{s-1},0,d_{2s-1}-2^{2s-1}\tau^s)$;
\item $L_{2s}=(2^{2s}\tau^s,0,(2s+1)2^{2s-1}\tau^s,0,(2s+1)2^{2s-1}\tau^{s-1},d_{2s})$.
\end{itemize}

Proceeding as in the previous case, we conclude that, for $n=3$,
the augmented matrix $[M|P]$ is
$${\small
[M|P]=\left[\begin{array}{ccccc|c}
0&2&0&2&0&d_1-2\tau\\
4\tau&0&6\tau&0&6&d_2\\
0&8\tau&0&16\tau&0&d_3-8\tau^2\\
16\tau^2&0&40\tau^2&0&40\tau&d_4\\
0&32\tau^2&0&96\tau^2&0&d_5-32\tau^3\\
\end{array}\right]}.
$$

Again, we have  $\det M=0,$
so that
$\det M_j=0$ for any $j\in\{1,\dots, 5\}$, for
$$x_0=(\alpha_{1,0},\alpha_{2,0},\beta_{0,0},\beta_{1,0},\beta_{2,0})$$
is a solution to~\eqref{eq-linearsystem}.
Since the 3-th and 5-th column vectors
of $M$ are linearly dependent, for $j\notin\{ 3, 5\},$ the equality $\det M_j=0$ holds for any
$\tau$. On the other hand, a direct computation gives
\begin{equation} \label{eq-detM3}
\det M_5=-\det M_3=d_12^{14}\tau^6-d_32^{13}\tau^5+d_52^{10}\tau^4.
\end{equation}

Thus, if $d_1, d_3$ and $d_5$ are not all zero,
any of the equalities $\det M_j=0$, $j\in\{3, 5\}$, gives that
$\tau$ is a nonzero root of a polynomial equation.
However, if $d_1=d_3=d_5=0$,  the determinants of all
$M_j$ will vanish identically. To overcome this problem,
we replace the system $Mx=P$ with a suitable system
$\overbar Mx=\overbar P$ as follows.

Given $s>3,$ consider the system $\overbar Mx=\overbar P$, where
$[\overbar M|\overbar P]$ is obtained from $[M|P]$ by replacing its
fifth row vector $L_5$ with $L_{2s-1}$. The above reasoning applied to
$\overbar Mx=\overbar P$ leads to the same conclusion:
if $d_1, d_3$ and $d_{2s-1}$ are not all zero,
then $\tau$ is a nonzero root of a polynomial equation.
Therefore, we can assume that $d_k=0$ for all odd $k\ge 1.$

Now, given $s>2$,  consider the system $\overbar Mx=\overbar P$, where
$[\overbar M|\overbar P]$ is obtained from $[M|P]$ by replacing its
fifth row vector $L_5$ with $L_{2s}.$ Setting
\[
\lambda=2^{2s}\tau^s, \quad \mu=(2s+1)2^{2s-1}\tau^s \quad\text{and}\quad \nu=(2s+1)2^{2s-1}\tau^{s-1},
\]
$[\overbar M|\overbar P]$ takes the form
$${\small
[\overbar M|\overbar P]=\left[\begin{array}{ccccc|c}
0&2&0&2&0&d_1-2\tau\\
4\tau&0&6\tau&0&6&d_2\\
0&8\tau&0&16\tau&0&d_3-8\tau^2\\
16\tau^2&0&40\tau^2&0&40\tau&d_4\\
\lambda&0&\mu&0&\nu &d_{2s}\\
\end{array}\right]}.
$$

Noticing that $\nu\tau=\mu$, one has
$\det\overbar M=2^{14}\tau^3(\nu\tau-\mu)=0$. Hence,
since $x_0$ is also a solution to $\overbar Mx=\overbar P$, we must have for any $s>2$ that
\begin{equation} \label{eq-n=3}
0=\det\overbar M_3=d_2(2-s)2^{2s+8}\tau^{s+2}+d_4(s-1)2^{2s+6}\tau^{s+1}-d_{2s}2^{10}\tau^3.
\end{equation}

As before, it follows from the above that, if $d_2, d_4$ and $d_{2s}$ are not all zero, then $\tau$ is
a nonzero root of a polynomial equation.
So, we can assume that $d_{2s}=0$ for all $s\ge 1$.

To finish the proof in this present case, we show now that the  assumption
$d_k=0$ for all $k\ge 1$ leads to a contradiction. With this
purpose, we first observe that, considering the expression of
$B(r)$ in~\eqref{eq-B(r)}, a direct computation gives
\begin{equation*}
D(r)=\mu_1(r)c_\tau^2(r)+\mu_2(r)s_\tau(r)c_\tau(r)+\mu_3(r)s_\tau^2(r),
\end{equation*}
where, for any $i\in\{1,2,3\}$,  $\mu_i$ is an affine function of $r$.

On the other hand, under our assumption on the constants $d_k$, it follows
from~\eqref{eq-fderivatives} that $D^{(k+1)}(0)=0$ for all $k\ge 1.$ Since
$D(0)=1$ and $D$ is clearly a real analytic function of $r$, this implies that
$D(r)-D(0)$ is a linear function of $r$ in a neighborhood of $r=0$, which
is a contradiction. This proves Proposition~\ref{prop-isoparametric-->constantangle}
in the case $n=3$.

Next, we treat the general case $n>3$. Our goal is to show that the augmented matrix
$[M\,|\,P]$ of size $(2n-1)\times 2n$ has the same key properties as in the cases
$n=2$ and $n=3$, which led to the polynomial identities \eqref{eq-n=2} (for $n=2$) and
\eqref{eq-detM3}–\eqref{eq-n=3} (for $n=3$). This is more involved; we establish the needed
facts in the appendix (Section~\ref{sec-appendix}) via a sequence of lemmas and propositions.
For the application here, we split the proof for $n>3$ into two parts.

\vt
\noindent
\textbf{Case 3:} $n>3$, $n$ \emph{even.}
\vt

By Proposition~\ref{prop-mainlinear}\,(i)(a), $\det M=0$. Since $x_0$ solves $Mx=P$, it follows that
$\det M_j=0$ for all $j\in\{1,\dots,2n-1\}$. Let $j=j_*$ be the index provided by
Proposition~\ref{prop-mainlinear}\,(i)(b). Then the equality $\det M_{j_*}=0$ is a non-trivial
algebraic equation in the variable $\tau$.

\vspace{.1cm}
\noindent
{\bf Case 4:} $n>3$, $n$ \emph{odd}.
\vt

Suppose that $d_s=0$ for all $s\ge 2n$. Then, as in the case $n=3$, the real-analyticity of $D$
would imply that $D$ is a polynomial in a neighborhood of $r=0$. However, from its expression
in~\eqref{eq-B(r)}, it is clear that $D(r)$ does not agree with any polynomial near $r=0$.
Hence there exists $s_*\ge 2n$ such that $d_{s_*}\ne 0$.

Let $M(s_*)$ be the matrix obtained from $M$ by replacing its last row with $L_{s_*}$ (omitting
the last entry). By Proposition~\ref{prop-mainlinear}\,(ii)(a), we have $\det M(s_*)=0$. Arguing
as before, it follows that $\det M_j(s_*)=0$ for all $j\in\{1,\dots,2n-1\}$. Finally, by
Proposition~\ref{prop-mainlinear}\,(ii)(b), the equality $\det M_n(s_*)=0$ yields a nontrivial
algebraic equation in $\tau$.

This concludes the proof of Proposition~\ref{prop-isoparametric-->constantangle}.
\end{proof}

\section{Proof of Theorem~\ref{thm-main}}

In this brief section we prove Theorem~\ref{thm-main},
which we restate here for the reader’s convenience:

\main*

\begin{proof}
(i) $\Rightarrow$ (ii). If $\Sigma$ is isoparametric, then by
Proposition~\ref{prop-isoparametric-->constantangle} it has constant angle;
hence, by the first part of Proposition~\ref{prop-constantangle}, it has constant principal curvatures.

\smallskip
\noindent
(ii) $\Rightarrow$ (iii). This follows directly from the second part of Proposition~\ref{prop-constantangle}.

\smallskip
\noindent
(iii) $\Rightarrow$ (iv) (for $\epsilon=-1$). In $\mathbb H^n$, a
complete hypersurface is isoparametric if and only if it is homogeneous.
Therefore any vertical cylinder in $\mathbb H^n\times\R$ over a complete
isoparametric hypersurface of $\mathbb H^n$ is homogeneous; slices are trivially
homogeneous; and parabolic bowls are homogeneous (as already verified).
Hence all hypersurfaces listed in (iii) are homogeneous in $\mathbb H^n\times\R$.

\smallskip
\noindent
(iv) $\Rightarrow$ (i). It is well known that this holds in general, i.e., any homogeneous
hypersurface of a Riemannian manifold is isoparametric.

\smallskip
\noindent
(iv) $\Leftrightarrow$ (v) (for $\epsilon=1$). The implication (v) $\Rightarrow$ (iv)
is immediate. Conversely, assume $\Sigma\subset \mathbb S^n\times\R$ is homogeneous.
From (iv) $\Rightarrow$ (i) $\Rightarrow$ (iii), $\Sigma$ must be one of the hypersurfaces in (a) or (b).
But a vertical cylinder  in $\s^n\times\R$
over a complete isoparametric hypersurface $\Sigma_0$ of $\s^n$ is homogeneous if and only
if $\Sigma_0$ is homogeneous. This proves the equivalence.
\end{proof}


\section{Appendix} \label{sec-appendix}

In this appendix, we prove some results which we have used in the proof of
Proposition~\ref{prop-isoparametric-->constantangle}. We keep the notation of
the preceding section.

To start with, define the row vector
\begin{equation}\label{LK}
\mathfrak L_{k-1}=(p_{k,0},\dots,p_{k,n-1},q_{k,0},\dots,q_{k,n-1}), \,\,\, k\ge 2,
\end{equation}
and denote by $Z=[p_{k,\ell},q_{k,\ell}]$ the $(2n-1)\times 2n$ matrix with
rows $\mathfrak L_{k-1}$, $2\le k\le2n$:
\begin{equation} \label{eq-Zmatrix}
Z=[p_{k,\ell},q_{k,\ell}]=
\begin{bmatrix}
\mathfrak L_1\\
\vdots\\
\mathfrak L_{2n-1}
\end{bmatrix}.
\end{equation}

Our interest in the matrix $Z$ relies on the equality
$Z=[-P_\tau,M],$ where
\begin{equation} \label{eq-ptau}
P_\tau:=P-P_d, \quad P_d=
\begin{bmatrix}
d_1\\
d_2\\
\vdots\\
d_{2n-1}
\end{bmatrix}.
\end{equation}

In what follows, we establish a series of lemmas that will lead to
a complete description of $Z$ (see Propositions~\ref{prop-coeff3} and~\ref{prop-p&q}).


\begin{lemma}\label{lem-reduction}
Let $\alpha_{\ell,k}$ and $\beta_{\ell,k}$ be the coefficient functions defined in~\eqref{eq-Dderivatives},
that is,
\begin{equation}\label{eq-Dderivatives02}
D^{(k)}(r)=\sum_{\ell=0}^{n-1}(\alpha_{\ell,k}+\beta_{\ell,k}r)s_\tau^\ell(r)c_\tau^{n-1-\ell}(r).
\end{equation}
Then, for any $n, k\in\N$, the following equalities hold:
$$
\left.
\begin{array}{lcll}
\alpha_{0,k+1}&=&\beta_{0,k}+\alpha_{1,k};\\[.7ex]
\alpha_{\ell,k+1}&=&\beta_{\ell,k}+(\ell+1)\alpha_{\ell+1,k}+\tau(n-\ell)\alpha_{\ell-1,k}&\text{if}\ \ell\in\{1,\dots,n-2\};\\[.7ex]
\alpha_{n-1,k+1}&=&\beta_{n-1,k}+\tau\alpha_{n-2,k};\\[.7ex]
\\
\beta_{0,k+1}&=&\beta_{1,k};\\[.7ex]
\beta_{\ell,k+1}&=&(\ell+1)\beta_{\ell+1,k}+\tau(n-\ell)\beta_{\ell-1,k}&\text{if}\ \ell\in\{1,\dots,n-2\};\\[.7ex]
\beta_{n-1,k+1}&=&\tau\beta_{n-2,k}.\\[.7ex]
\end{array}
\right.
$$

\end{lemma}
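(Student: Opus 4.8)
The plan is to establish the recursion for $\alpha_{\ell,k+1}$ and $\beta_{\ell,k+1}$ by differentiating the defining expansion~\eqref{eq-Dderivatives02} once and re-expanding the result back into the basis $\{s_\tau^\ell c_\tau^{n-1-\ell}\}_{\ell=0}^{n-1}$ (and its $r$-multiples). The only tools needed are the derivative rules~\eqref{eq-s'&c'}, namely $s_\tau'=c_\tau$ and $c_\tau'=\tau s_\tau$, together with the product rule. So the proof is a direct computation; the only subtlety is bookkeeping the index shifts correctly at the two boundary values $\ell=0$ and $\ell=n-1$.

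Concretely, I would write $D^{(k)}(r)=\sum_{\ell=0}^{n-1}(\alpha_{\ell,k}+\beta_{\ell,k}r)\,g_\ell(r)$ with $g_\ell(r):=s_\tau^\ell(r)c_\tau^{n-1-\ell}(r)$, and compute $g_\ell'$. By the product rule and~\eqref{eq-s'&c'},
\begin{equation*}
g_\ell' = \ell\, s_\tau^{\ell-1}c_\tau^{n-1-\ell}\cdot s_\tau' + (n-1-\ell)s_\tau^\ell c_\tau^{n-2-\ell}\cdot c_\tau'
= \ell\, s_\tau^{\ell-1}c_\tau^{n-\ell} + (n-1-\ell)\tau\, s_\tau^{\ell+1}c_\tau^{n-2-\ell},
\end{equation*}
i.e. $g_\ell' = \ell\, g_{\ell-1}\cdot\tfrac{c_\tau^{?}}{\cdots}$ — more cleanly, $g_\ell'=\ell\, (c_\tau/ s_\tau)\,g_\ell + (n-1-\ell)\tau (s_\tau/c_\tau) g_\ell$, but it is better to keep it in the un-cancelled form $g_\ell' = \ell\, s_\tau^{\ell-1}c_\tau^{n-\ell} + (n-1-\ell)\tau\, s_\tau^{\ell+1}c_\tau^{n-2-\ell}$. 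Note $s_\tau^{\ell-1}c_\tau^{n-\ell}=g_{\ell-1}$ and $s_\tau^{\ell+1}c_\tau^{n-2-\ell}=g_{\ell+1}$, so $g_\ell' = \ell\, g_{\ell-1} + (n-1-\ell)\tau\, g_{\ell+1}$, valid for $1\le\ell\le n-2$; at $\ell=0$ the first term drops ($g_0'=(n-1)\tau g_1$), and at $\ell=n-1$ the second drops ($g_{n-1}'=(n-1)g_{n-2}$). Then
\begin{equation*}
D^{(k+1)} = \sum_{\ell}\Big[\beta_{\ell,k}\,g_\ell + (\alpha_{\ell,k}+\beta_{\ell,k}r)\,g_\ell'\Big],
\end{equation*}
and substituting the expression for $g_\ell'$ and collecting, for each fixed $m$, all contributions proportional to $g_m$ and to $r\,g_m$ yields the claimed formulas: the $\beta_{m,k}g_m$ term, the term $(m+1)\alpha_{m+1,k}g_m$ coming from $g_{m+1}'$'s first summand (with $\ell=m+1$, coefficient $\ell=m+1$), and the term $\tau(n-m)\alpha_{m-1,k}g_m$ coming from $g_{m-1}'$'s second summand (with $\ell=m-1$, coefficient $(n-1-\ell)\tau=(n-m)\tau$); likewise for the $r\,g_m$ coefficients, which produce exactly the $\beta$-recursion. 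Reading off $\alpha_{m,k+1}$ and $\beta_{m,k+1}$ as the coefficients of $g_m$ and $r\,g_m$ respectively (this uses that the $g_m$, together with $r\,g_m$, are linearly independent functions of $r$ for $\tau\neq 0$, which is the same fact implicitly used to define the $\alpha,\beta$ in the first place) finishes the proof.

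The main obstacle — such as it is — is purely organizational: one must be careful that the index shift in "$g_{\ell+1}'$ contributes to $g_\ell$ with coefficient $\ell+1$" and "$g_{\ell-1}'$ contributes to $g_\ell$ with coefficient $(n-\ell)\tau$" matches the stated recursion, and that the two extreme cases $m=0$ (no incoming $g_{-1}$ term, so only $\beta_{0,k}+\alpha_{1,k}$ survives) and $m=n-1$ (no incoming $g_n$ term, so only $\beta_{n-1,k}+\tau\alpha_{n-2,k}$ survives) come out correctly. Since the boundary derivative formulas $g_0'=(n-1)\tau g_1$ and $g_{n-1}'=(n-1)g_{n-2}$ are exactly the $\ell=0$ and $\ell=n-1$ specializations of the interior formula $g_\ell'=\ell g_{\ell-1}+(n-1-\ell)\tau g_{\ell+1}$ (with the out-of-range term simply absent), no separate argument is needed — the interior computation already covers the endpoints once one agrees $\alpha_{-1,k}=\alpha_{n,k}=\beta_{-1,k}=\beta_{n,k}=0$. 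Thus the single interior calculation, read off at each $m\in\{0,\dots,n-1\}$, proves all six displayed identities simultaneously.
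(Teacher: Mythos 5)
Your proposal is correct and is essentially identical to the paper's own proof: both differentiate the expansion once, use $s_\tau'=c_\tau$ and $c_\tau'=\tau s_\tau$ to get $g_\ell'=\ell\,g_{\ell-1}+(n-1-\ell)\tau\,g_{\ell+1}$ (with out-of-range terms absent), re-index, and compare coefficients of $g_m$ and $r\,g_m$. Your formulation via the single interior identity with the convention $\alpha_{-1,k}=\alpha_{n,k}=\beta_{-1,k}=\beta_{n,k}=0$ is a slightly tidier way of handling the boundary cases, but the argument is the same.
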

\begin{proof}
Considering~\eqref{eq-Dderivatives02}, we have that
\begin{eqnarray*}
D^{(k+1)}(r)&=& \sum_{\ell=0}^{n-1}\left[\beta_{\ell,k}s_\tau^\ell(r)c_\tau^{n-1-\ell}(r)+\ell(\alpha_{\ell,k}+\beta_{\ell,k}r)s_\tau^{\ell-1}(r)c_\tau^{n-\ell}(r)\right.\\
&&\qquad\left.+(n-1-\ell)(\alpha_{\ell,k}+\beta_{\ell,k}r)s_\tau^{\ell+1}(r)c_\tau^{n-2-\ell}(r)\right]\\
&=&\sum_{\ell=0}^{n-1}\beta_{\ell,k}s_\tau^\ell(r)c_\tau^{n-1-\ell}(r)+\sum_{p=0}^{n-1}p(\alpha_{p,k}+\beta_{p,k}r)s_\tau^{p-1}(r)c_\tau^{n-p}(r)\\
&&+\sum_{q=0}^{n-1}(n-1-q)(\alpha_{q,k}+\beta_{q,k}r)s_\tau^{q+1}(r)c_\tau^{n-2-q}(r)\\
&=&\sum_{\ell=0}^{n-1}\beta_{\ell,k}s_\tau^\ell(r)c_\tau^{n-1-\ell}(r)+\sum_{\ell=0}^{n-2}\ell(\alpha_{\ell+1,k}+\beta_{\ell+1,k}r)s_\tau^{\ell}(r)c_\tau^{n-1-\ell}(r)\\
&&+\sum_{\ell=1}^{n-1}(n-\ell)(\alpha_{\ell-1,k}+\beta_{\ell-1,k}r)s_\tau^{\ell}(r)c_\tau^{n-1-\ell}(r)\\
&=&(\beta_{0,k}+\alpha_{1,k}+\beta_{1,k}r)c_\tau^{n-1}(r)+(\beta_{n-1,k}+\tau\alpha_{n-2,k}+\tau\beta_{n-2,k}r)s_\tau^{n-1}(r)\\
&&+\sum_{\ell=1}^{n-2}[\beta_{\ell,k}+(\ell+1) (\alpha_{\ell+1,k}+\beta_{\ell+1,k}r)\\
&&\phantom{aaaaaaaaaaaaaaaaaaaa} +\tau(n-\ell)(\alpha_{\ell-1,k}+\beta_{\ell-1,k}r)]s_\tau^\ell(r)c_\tau^{n-1-\ell}(r).
\end{eqnarray*}

The result follows by comparison of the coefficients of the last equality with those of
$D^{(k+1)}$ as given in~\eqref{eq-Dderivatives02}.
\end{proof}


Now, with our purpose of describing the matrix $Z$,
we establish some fundamental properties of the coefficients $p_{k,\ell}$ and $q_{k,\ell}$ defined by
the equality
\begin{equation} \label{eq-linearcombination01}
\alpha_{0,k+1}=\sum_{\ell=0}^{n-1}(p_{k+1,\ell}\alpha_{\ell,0}+q_{k+1,\ell}\beta_{\ell,0}).
\end{equation}

We start with the case $n=3$, in which one has
\begin{equation}\label{eq-ic3}
p_{2,0}=2\tau, \,\, p_{2,1}=0, \,\, p_{2,2}=2, \,\, q_{2,0}=0, \,\, q_{2,1}=2, \,\,\text{and}\,\, q_{2,2}=0.
\end{equation}
Besides, for all $k\ge 2,$ we have from Lemma~\ref{lem-reduction} that
\begin{eqnarray*}
\alpha_{0,k+1}&=&\sum_{\ell=0}^2(p_{k,\ell}\alpha_{\ell,1}+q_{k,\ell}\beta_{\ell,1})\\[1ex]
&=&2\tau p_{k,1}\alpha_{0,0}+(p_{k,0}+\tau p_{k,2})\alpha_{1,0}+2p_{k,1}\alpha_{2,0}\\[1ex]
&&+(p_{k,0}+2\tau q_{k,1})\beta_{0,0}+(p_{k,1}+q_{k,0}+\tau q_{k,2})\beta_{1,0}+(p_{k,2}+2q_{k,1})\beta_{2,0}.
\end{eqnarray*}

Comparing this last equality with~\eqref{eq-linearcombination01} for
$n=3$, we conclude that,  for all $k\geq 2$, the following identities hold:
\begin{equation}\label{eq-rec3}
\left.\begin{array}{rcl}
p_{k+1,0}&=&2\tau p_{k,1}\\
p_{k+1,1}&=&p_{k,0}+\tau p_{k,2}\\
p_{k+1,2}&=&2p_{k,1}\\
\\
q_{k+1,0}&=&p_{k,0}+2\tau q_{k,1}\\
q_{k+1,1}&=&p_{k,1}+q_{k,0}+\tau q_{k,2}\\
q_{k+1,2}&=&p_{k,2}+2q_{k,1}.
\end{array}\right.
\end{equation}

By means of the  equalities in~\eqref{eq-rec3}, we can compute explicitly the coefficients
$p_{k,\ell}$ and $q_{k,\ell}$ as shown in the proposition below.

\begin{proposition}\label{prop-coeff3}
For any $k\geq 2$, $s\geq 1$ and $\ell\in\{0,1,2\}$, the following hold:
\begin{itemize}[parsep=1ex]
\item[\rm (i)] If $k+\ell$ is odd, then $p_{k,\ell}=0$, if $k+\ell$ is even, then $q_{k,\ell}=0$;
\item[\rm (ii)] $p_{2s,0}=\tau p_{2s,2}= 2^{2s-1}\tau^s$;
\item[\rm (iii)] $p_{2s+1,1}=2^{2s}\tau^s$;
\item[\rm (iv)] $q_{2s+1,0}=\tau q_{2s+1,2}= (2s+1)2^{2s-1}\tau^s$;
\item[\rm (v)] $q_{2s,1}=s2^{2s-1}\tau^{s-1}$.
\end{itemize}
\end{proposition}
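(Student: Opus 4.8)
The plan is to prove all five assertions simultaneously by induction on the level $k$, carrying the even and odd levels along together, since the recursion~\eqref{eq-rec3} ties each level to the next. The first step is part~(i), the parity vanishing, because everything else relies on it. I would verify~(i) at $k=2$ directly from the initial data~\eqref{eq-ic3}: there $p_{2,1}=q_{2,0}=q_{2,2}=0$, exactly the entries with $k+\ell$ odd (for $p$) or even (for $q$), and the three nonvanishing entries $p_{2,0},p_{2,2},q_{2,1}$ have the permitted parities. For the inductive step one checks the six identities in~\eqref{eq-rec3} one at a time: whenever the parity of $(k+1)+\ell$ forces the left-hand side into its ``must vanish'' case, the parities of all index pairs occurring on the right-hand side land each term there in its own vanishing case, so the inductive hypothesis annihilates it. This is a short finite check, one line per identity.

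With~(i) established, at each level only three of the six coefficients survive: at an even level $k=2s$ they are $p_{2s,0},p_{2s,2},q_{2s,1}$, and at an odd level $k=2s+1$ they are $p_{2s+1,1},q_{2s+1,0},q_{2s+1,2}$. Discarding the vanishing entries from~\eqref{eq-rec3} collapses it to the reduced system
\begin{gather*}
p_{2s,0}=2\tau\,p_{2s-1,1},\quad p_{2s,2}=2\,p_{2s-1,1},\quad p_{2s+1,1}=p_{2s,0}+\tau\,p_{2s,2},\\
q_{2s+1,0}=p_{2s,0}+2\tau\,q_{2s,1},\quad q_{2s+1,2}=p_{2s,2}+2\,q_{2s,1},\quad q_{2s+2,1}=p_{2s+1,1}+q_{2s+1,0}+\tau\,q_{2s+1,2},
\end{gather*}
valid for $s\ge1$ (the first two relations being understood for $s\ge2$, their $s=1$ values being furnished by~\eqref{eq-ic3}). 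Two structural observations make the remainder routine. First, the three $p$-relations are self-contained: they give $p_{2s,0}=\tau\,p_{2s,2}$ at once and $p_{2s+1,1}=4\tau\,p_{2s-1,1}$, so the $p$-chain closes on itself. Second, once $p_{2s,0}=\tau\,p_{2s,2}$ is known, the relations for $q_{2s+1,0}$ and $q_{2s+1,2}$ force $q_{2s+1,0}=\tau\,q_{2s+1,2}$. These give the ``$\tau$-proportionality'' halves of~(ii) and~(iv) for free.

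It then remains to track the magnitudes. From $p_{3,1}=p_{2,0}+\tau p_{2,2}=4\tau$ together with $p_{2s+1,1}=4\tau\,p_{2s-1,1}$ one obtains $p_{2s+1,1}=(4\tau)^s=2^{2s}\tau^s$, which is~(iii); substituting back into the first two $p$-relations yields $p_{2s,0}=2^{2s-1}\tau^s$ and $p_{2s,2}=2^{2s-1}\tau^{s-1}$, completing~(ii). For~(v) I would induct on $s$ starting from $q_{2,1}=2$: assuming $q_{2s,1}=s2^{2s-1}\tau^{s-1}$, the reduced relations give $q_{2s+1,0}=(2s+1)2^{2s-1}\tau^s$ and $q_{2s+1,2}=(2s+1)2^{2s-1}\tau^{s-1}$ (this is~(iv)), and then $q_{2s+2,1}=2^{2s}\tau^s+(2s+1)2^{2s}\tau^s=(s+1)2^{2s+1}\tau^s$, which is precisely~(v) at $s+1$.

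The argument has no real obstacle. The two points that need care are doing~(i) first, so that the reduction to three surviving coefficients is legitimate, and handling the base level $s=1$ from the explicit values in~\eqref{eq-ic3} rather than from the recursion (since $p_{1,\ell}$ and $q_{1,\ell}$ are undefined). The only genuinely error-prone part is the bookkeeping of the interleaved even/odd induction, which is why I would display the reduced system explicitly and then read off each of the five claims from it as above.
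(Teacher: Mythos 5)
Your proposal is correct and follows essentially the same route as the paper: part~(i) by parity induction from the initial data~\eqref{eq-ic3}, and then the explicit values of the surviving coefficients by induction on $s$ using the recursion~\eqref{eq-rec3}. Your reorganization into a single interleaved induction on the "reduced system" is only a cosmetic difference from the paper's item-by-item treatment; all the computations check out.
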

\begin{proof}
(i) We proceed by induction on $k$. The result is true if $k=2$ by the initial conditions~\eqref{eq-ic3}.
Suppose now that for any $\ell$ such that $k+\ell$ is odd we have $p_{k,\ell}=0$ and
for any $\ell$ such that $k+\ell$ is even we have $q_{k,\ell}=0$. The first result follows by applying the inductive
hypothesis to the equalities for $p_{k+1,\ell}$ in~\eqref{eq-rec3}. Indeed, in these equalities, just the $p$-functions appear, and
the parity of the sum of the indices does not change. Similarly, in the equation for $q_{k+1,\ell}$,
the parity of the sum of the indices of the $q$-functions  doesn't change, whereas it changes for the $p$-functions.


\vspace{.1cm}
\noindent
(ii) We prove the two equalities separately.  By~\eqref{eq-rec3}, we have
$$
p_{2s+2,0}=2\tau p_{2s+1,1}=\tau p_{2s+2,2}.
$$
For the second equality, we proceed by induction on $s$. The case $s=1$
is true because of the initial conditions~\eqref{eq-ic3}. Now, let $s\geq 1$,
and suppose that $p_{2s,0}=2^{2s-1}\tau^s$. Applying~\eqref{eq-rec3} twice and considering the previous equality,
we have from  the inductive hypothesis that
\begin{eqnarray*}
\nonumber p_{2s+2,0}&=& 2\tau p_{2s+1,1}\\
&=&2\tau(p_{2s,0}+\tau p_{2s,2})\\
&=&2\tau (2^{2s-1}\tau^s+2^{2s-1}\tau^s)\ =\ 2^{2s-1}\tau^{s-1}.
\end{eqnarray*}
\vspace{.1cm}
\noindent
(iii) Fix $s\geq 1$. By~\eqref{eq-rec3} and the proved equality (ii),  we have
$$
p_{2s+1,1}=p_{2s,0}+\tau p_{2s,2}=2(2^{2s-1}\tau^s)=2^{2s}\tau^s.
$$
\vspace{.1cm}
\noindent
(iv) The first equality can be proved with the help of~\eqref{eq-rec3} and equality (ii):
$$
q_{2s+1,0}=p_{2s,0}+2\tau q_{2s,1}=\tau p_{2s,2}+\tau q_{2s,1}=\tau q_{2s+1,2}.
$$

For the second equality, we proceed by induction
on $s$. From~\eqref{eq-ic3} and~\eqref{eq-rec3}, we have:
$$
q_{3,0}=p_{2,0}+2\tau q_{2,1}=4\tau.
$$
Suppose the result is true for $s\geq 1$. By~\eqref{eq-rec3}, the previous equality,
and (ii), we have:
\begin{eqnarray*}
q_{2s+3,0} &=&p_{2s+2,0}+2\tau q_{2s+2,1}\\
&=&2^{2s+1}\tau^{s+1}+2\tau (p_{2s+1,1}+q_{2s+1,0}+\tau q_{2s+1,2})\\
&=&2^{2s+2}\tau^{s+1}+4\tau q_{2s+1,0}\\
&=&2^{2s+2}\tau^{s+1}+4\tau(2^{2s-1}\tau^{s-1}(2s+1))\\
&=&2^{2s+1}\tau^s(2s+3).
\end{eqnarray*}
\vspace{.1cm}
\noindent
(v) From~\eqref{eq-rec3}, (iii), and (iv),  we get
\begin{eqnarray*}
q_{2s,1}&=&p_{2s-1,1}+q_{2s-1,0}+\tau q_{2s-1,2}\\
&=& 2^{2s-2}\tau^{s-1}+2 q_{2s-1,0}\\
&=& 2^{2s-2}\tau^{s-1}+2((2s-1)2^{2s-3}\tau^{s-1})\\
&=&s2^{2s-1}\tau^{s-1},
\end{eqnarray*}
and this finishes the proof.
\end{proof}


Next, we apply Lemma~\ref{lem-reduction} to establish relations between the coefficient
functions of the expression of $\alpha_{0,k+1}$ as in~\eqref{eq-linearcombination01} for any $n\geq 4$.

\begin{lemma}\label{lem-coeff}
Let $p_{k+1,\ell}$ and $q_{k+1,\ell}$ be the coefficient functions defined in~\eqref{eq-linearcombination},
i.e.,
\begin{equation*}
\alpha_{0,k+1}=\sum_{\ell=0}^{n-1}(p_{k+1,\ell}\alpha_{\ell,0}+q_{k+1,\ell}\beta_{\ell,0}).
\end{equation*}
Then, for all  $n\geq 4$ and $k\geq 2$, the following equalities hold:
\begin{eqnarray*}
p_{2,0}&=&(n-1)\tau, \quad p_{2,2}\,=\,2\quad\text{\rm and}\quad p_{2,\ell}\ =\ 0 \ \text{\rm if}\  \ell\ne 0,2\\
p_{k+1,0}&=&(n-1)\tau p_{k,1}\\
p_{k+1,\ell}&=&\ell\cdot p_{k,\ell-1}+(n-1-\ell)\tau p_{k,\ell+1},\quad\text{\rm if}\ \ell\in\{1,\dots,n-2\}\\
p_{k+1,n-1}&=& (n-1)p_{k,n-2}\\
\\
q_{2,1}&=&2\quad\text{\rm and}\quad q_{2,\ell}\ =\ 0  \ \text{\rm if}\  \ell\ne 1\\
q_{k+1,0}&=&p_{k,0}+(n-1)\tau q_{k,1}\\
q_{k+1,\ell}&=&p_{k,\ell}+\ell\cdot q_{k,\ell-1}+(n-1-\ell)\tau q_{k,\ell+1},\quad\text{\rm if}\ \ell\in\{1,\dots,n-2\}\\
q_{k+1,n-1}&=& {p_{k,n-1}+(n-1)q_{k,n-2}}
\end{eqnarray*}
\end{lemma}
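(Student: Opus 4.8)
The plan is to derive all the stated identities directly from Lemma~\ref{lem-reduction} by tracking how the linear-combination coefficients $p_{k+1,\ell}$ and $q_{k+1,\ell}$ transform under one application of the recursion. Recall that \eqref{eq-linearcombination01} expresses $\alpha_{0,k+1}$ as a linear combination of the initial data $\alpha_{\ell,0},\beta_{\ell,0}$. The idea is to write $\alpha_{0,k+1}$ in two ways: on one hand using the coefficients at step $k$ applied to the \emph{first-derivative} data $\alpha_{\ell,1},\beta_{\ell,1}$, and on the other hand using Lemma~\ref{lem-reduction} to re-express each $\alpha_{\ell,1}$ and $\beta_{\ell,1}$ in terms of the $\alpha_{\ell,0},\beta_{\ell,0}$. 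Matching the two expressions against \eqref{eq-linearcombination01} for step $k+1$ will yield the recursions.

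First I would establish the initial conditions. Since $D(r)=\det B(r)$ with $B(r)$ as in~\eqref{eq-B(r)}, one computes $D^{(2)}(0)$ directly; alternatively, one uses Lemma~\ref{lem-reduction} with $k=0$ together with the explicit values of $\alpha_{\ell,0},\beta_{\ell,0}$ coming from the $k=0$ case of~\eqref{eq-Dderivatives02} (where $D(r)=\det B(r)$ expanded along the first row gives $\alpha_{\ell,0},\beta_{\ell,0}$ explicitly). The key observation is that in the $k=0$ layer only $\beta_{n-1,0}$-type terms and a few others are nonzero; running Lemma~\ref{lem-reduction} once produces $\alpha_{0,2}=\beta_{0,1}+\alpha_{1,1}$, and then one more substitution using the $\beta$-recursions and the known $\alpha_{\ell,0},\beta_{\ell,0}$ yields $\alpha_{0,2}=(n-1)\tau\,\alpha_{0,0}+\cdots$, from which the claimed values $p_{2,0}=(n-1)\tau$, $p_{2,2}=2$, $q_{2,1}=2$, and all other $p_{2,\ell},q_{2,\ell}$ zero can be read off by comparison with~\eqref{eq-linearcombination01}.

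Next I would derive the recursion for $k\ge 2$. Starting from $\alpha_{0,k+1}=\beta_{0,k}+\alpha_{1,k}$ (the first identity of Lemma~\ref{lem-reduction}) is one route, but the cleaner route is the ``shift'' trick: write $\alpha_{0,k+1}=\sum_{\ell}(p_{k,\ell}\alpha_{\ell,1}+q_{k,\ell}\beta_{\ell,1})$, valid because the map from initial data to $\alpha_{0,k+1}$ is the same linear functional whether we start the clock at step $0$ or step $1$. Now substitute the expressions for $\alpha_{\ell,1}$ and $\beta_{\ell,1}$ in terms of $\alpha_{\bullet,0},\beta_{\bullet,0}$ given by Lemma~\ref{lem-reduction} with $k=0$:
\begin{align*}
\alpha_{0,1}&=\beta_{0,0}+\alpha_{1,0}, & \alpha_{\ell,1}&=\beta_{\ell,0}+(\ell+1)\alpha_{\ell+1,0}+\tau(n-\ell)\alpha_{\ell-1,0}, & \alpha_{n-1,1}&=\beta_{n-1,0}+\tau\alpha_{n-2,0},\\
\beta_{0,1}&=\beta_{1,0}, & \beta_{\ell,1}&=(\ell+1)\beta_{\ell+1,0}+\tau(n-\ell)\beta_{\ell-1,0}, & \beta_{n-1,1}&=\tau\beta_{n-2,0}.
\end{align*}
Collecting the coefficient of each $\alpha_{\ell,0}$ and each $\beta_{\ell,0}$ and comparing with~\eqref{eq-linearcombination01} at level $k+1$ produces exactly the eight displayed equalities. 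In particular, the coefficient of $\alpha_{\ell,0}$ receives a contribution $\ell\cdot p_{k,\ell-1}$ from the $\tau(n-(\ell-1))$-term?—one must be careful here: the term $\tau(n-\ell)\alpha_{\ell-1,0}$ in $\alpha_{\ell,1}$ contributes to the coefficient of $\alpha_{\ell-1,0}$, so reading it the other way, $\alpha_{\ell,0}$ picks up $\tau(n-1-\ell)p_{k,\ell+1}$ from $\alpha_{\ell+1,1}$'s lower-shift term and $\ell\,p_{k,\ell-1}$ from $\alpha_{\ell-1,1}$'s upper-shift term, which matches the stated $p_{k+1,\ell}=\ell\,p_{k,\ell-1}+(n-1-\ell)\tau p_{k,\ell+1}$. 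The $q$-recursions follow identically, with the extra $+p_{k,\ell}$ term arising because each $\alpha_{\ell,1}$ carries a $\beta_{\ell,0}$-summand.

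The main obstacle I anticipate is purely bookkeeping: correctly aligning the index shifts $\ell\mapsto\ell\pm1$ in the double sum and handling the boundary cases $\ell=0$ and $\ell=n-1$ separately (where Lemma~\ref{lem-reduction} has the special truncated forms), so as to land on exactly $(n-1)\tau p_{k,1}$ for $p_{k+1,0}$ and $(n-1)p_{k,n-2}$ for $p_{k+1,n-1}$, and likewise for the $q$'s. No genuinely new idea is needed beyond Lemma~\ref{lem-reduction}; the proof is a careful coefficient comparison.
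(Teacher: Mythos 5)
Your proposal is correct and follows essentially the same route as the paper: the authors also write $\alpha_{0,k+1}=\sum_{\ell}(p_{k,\ell}\alpha_{\ell,1}+q_{k,\ell}\beta_{\ell,1})$, substitute the $k=0$ instances of Lemma~\ref{lem-reduction} for $\alpha_{\ell,1}$ and $\beta_{\ell,1}$, and read off the recursions (including the boundary cases $\ell=0,\,n-1$) by comparing coefficients, with the initial values coming from $\alpha_{0,2}=(n-1)\tau\alpha_{0,0}+2\alpha_{2,0}+2\beta_{1,0}$. Your index bookkeeping for the shifts $\ell\mapsto\ell\pm1$ is the correct one.
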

\begin{proof}
We have that
$$
\alpha_{0,k+1}=\sum_{\ell=0}^{n-1}(p_{k+1,\ell}\alpha_{\ell,0}+q_{k+1,\ell}\beta_{\ell,0}),
$$
but, by Lemma~\ref{lem-reduction} we can prove also that
\begin{eqnarray*}
\alpha_{0,k+1}&=&\sum_{\ell=0}^{n-1}(p_{k,\ell}\alpha_{\ell,1}+q_{k,\ell}\beta_{\ell,1})\\
&=&p_{k,0}(\beta_{0,0}+\alpha_{1,0})+p_{k,n-1}(\beta_{n-1,0}+\tau\alpha_{n-2,0})\\
&&+\sum_{\ell=1}^{n-2}p_{k,\ell}(\beta_{\ell,0}+(\ell+1)\alpha_{\ell+1,0}+(n-\ell)\tau\alpha_{\ell-1,0})\\
&&+q_{k,0}\beta_{1,0}+q_{k,n-1}\tau\beta_{n-2,0}\\
&&+\sum_{\ell=1}^{n-2}q_{k,\ell}((\ell+1)\beta_{\ell+1,0}+(n-\ell)\tau\beta_{\ell-1,0})\\
&=&(n-1)\tau p_{k,1}\alpha_{0,0}+(p_{k,{0}}+(n-2)\tau p_{k,2})\alpha_{1,0}\\
&&+\sum_{\ell=2}^{n-3}(\ell\cdot p_{k,\ell-1}+(n-1-\ell)\tau p_{k,\ell+1})\alpha_{\ell,0}\\[.5ex]
&&+((n-2)p_{k,n-3}+\tau p_{k,n-1})\alpha_{n-2,0}+({(n-1) p_{k,n-2}})\alpha_{n-1,0}\\[.5ex]
&&+(p_{k,0}+(n-1)\tau q_{k,1})\beta_{0,0}+(p_{k,1}+q_{k,0}+(n-2)\tau q_{k,2})\beta_{1,0}\\
&&+\sum_{\ell=2}^{n-3}(p_{k,\ell}+\ell\cdot q_{k,\ell-1}+(n-1-\ell)\tau q_{k,\ell+1})\beta_{\ell,0}\\
&&+(p_{k,n-2}+(n-2)q_{k,n-3}+\tau q_{k,n-1})\beta_{n-2,0}\\
&&+({p_{k,n-1}+(n-1)q_{k,n-2}})\beta_{n-1,0}
\end{eqnarray*}
In the last equality, if $n=4$, then the summand from $\ell=2$ to $\ell=n-3$ should be ignored.
The result follows by comparison of the corresponding coefficients.
\end{proof}
\begin{example} \label{examp-Zmatrices}
By means of the relations established in Lemma~\ref{lem-coeff}, we can obtain the matrix
$Z=[p_{k,\ell},q_{k\ell}]$ for any value of $n\ge 2$. For $n$ from $2$ to $5$,  for instance,
$Z$ is given by the matrices:

\vspace{.3cm}
\noindent
$\bullet \,\,
{\small
\begin{bmatrix}
\tau & 0 & 0 & 2 \\[.5ex]
0 & \tau & 3\tau & 0 \\[.5ex]
\tau^2 &0 & 0 & 4\tau
\end{bmatrix}};$

\vspace{.3cm}
\noindent
$\bullet \,\,
\left[
\begin{smallmatrix}
2\tau &0&2&0&2&0\\[.5ex]
0&4\tau&0&6\tau&0&6\\[.5ex]
8\tau^2&0&8\tau&0&16\tau&0\\[.5ex]
0&16\tau^2&0&40\tau^2&0&40\tau\\[.5ex]
32\tau^3&0&32\tau^2&0&96\tau^2&0
\end{smallmatrix}\right];$

\vspace{.3cm}

\noindent
$\bullet \,\,
\left[
\begin{smallmatrix}
3\tau&0&2&0&0&2&0&0\\[.5ex]
0&7\tau&0&6&9\tau&0&6&0\\[.5ex]
21\tau^2&0&20\tau&0&0&28\tau&0&24\\[.5ex]
0&61\tau^2&0&60\tau&105\tau^2&0&100\tau&0\\[.5ex]
183\tau^3&0&182\tau^2&0&0&366\tau^2&0&360\tau\\[.5ex]
0&547\tau^3&0&546\tau^2&1281\tau^3&0&1274\tau^2&0\\[.5ex]
1641\tau^4&0&1640\tau^3&0&0&4376\tau^3&0&4368\tau^2
\end{smallmatrix}\right];$

\vspace{.3cm}

\noindent
$\bullet \,\, \left[
\begin{smallmatrix}
4\tau & 0          & 2            & 0          & 0           & 0           & 2            & 0           & 0           &   0         \\[.5ex]
0 & 10\tau     & 0            & 6          & 0           & 12\tau      & 0            & 6           & 0           &   0         \\[.5ex]
40\tau^2 &  0          & 32\tau       & 0          & 24          & 0           & 40\tau       & 0           & 24          &   0         \\[.5ex]
0 & 136\tau^2  & 0            & 120\tau    & 0           & 200\tau^2   & 0            & 160\tau     & 0           & 120         \\[.5ex]
544\tau^3& 0          & 512\tau^2    & 0          & 480\tau     & 0           & 816\tau^2    & 0           & 720\tau     &   0         \\[.5ex]
0&2080\tau^3 & 0            & 2016\tau^2 & 0           & 3808\tau^3  & 0            & 3584\tau^2  & 0           & 3360\tau    \\[.5ex]
8320\tau^4&0          & 8192\tau^3   & 0          & 8064\tau^2  & 0           & 16640\tau^3  & 0           & 16128\tau^2 & 0           \\[.5ex]
0&32896\tau^4& 0            & 32640\tau^3& 0           & 74880\tau^2 & 0            & 73728\tau^3 & 0           & 72576\tau^2 \\[.5ex]
131584\tau^5&0          & 131072\tau^4 & 0          & 130560\tau^3& 0           & 328960\tau^2 & 0           & 326400\tau^3& 0
\end{smallmatrix}\right].
$
\end{example}


In the next proposition, we establish some  fundamental properties of
the coefficients $p_{k,\ell}$, $q_{k,\ell}$ that will lead to a general description
of the matrix $Z$. These properties can be checked in the matrices of the above example.

\begin{proposition}\label{prop-p&q}
Given $n\geq 4$ and $k\geq 2$, the following assertions hold:
\begin{itemize}[parsep=1ex]
\item[\rm (i)]  $p_{k,\ell}=0$ if $k+\ell$ is odd, and $q_{k,\ell}=0$ if $k+\ell$ is even;
\item[\rm (ii)] $p_{k,\ell}=q_{k,\ell}=0$ for all $\ell>k$;
\item[\rm (iii)] $p_{k,k}=k!$ and {$q_{k+1,k}=(k+1)!$} for all $k\ge 2$;
\item[\rm (iv)] if $k-\ell=2s$, then $p_{k,\ell}=\sigma_{k,\ell}^m(n)\tau^s$,
where $\sigma_{k,\ell}^m$ is a  polynomial of degree $m\ge s$ with positive leading coefficient;
\item[\rm (v)] if $k-\ell=2s+1$, then $q_{k,\ell}=\omega_{k,\ell}^m(n)\tau^s$, where
$\omega_{k,\ell}^m$ is a polynomial of degree $m\ge s$ with positive leading coefficient.
\end{itemize}
\end{proposition}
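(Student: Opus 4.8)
The plan is to prove all five assertions by a simultaneous induction on $k$, using the recursion relations from Lemma~\ref{lem-coeff} together with the initial data $p_{2,\ell}$, $q_{2,\ell}$. The recursions for $p_{k+1,\ell}$ involve only the $p_{k,\ell'}$ (with shifted indices $\ell\pm 1$), while those for $q_{k+1,\ell}$ involve the $q_{k,\ell'}$ plus a single $p_{k,\ell}$ term; this structure is what makes the induction close. I will treat the items in the order (i), (ii), (iii), (iv), (v), as each relies on the previous ones.

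\medskip

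First, for (i): the base case $k=2$ holds since $p_{2,\ell}\ne 0$ only for $\ell\in\{0,2\}$ (both even, so $k+\ell$ even) and $q_{2,\ell}\ne 0$ only for $\ell=1$ ($k+\ell$ odd). For the inductive step, observe that in each recursion for $p_{k+1,\ell}$ the right-hand side is a combination of $p_{k,\ell-1}$ and $p_{k,\ell+1}$; since $(k+1)+\ell$ and $k+(\ell\mp 1)$ have the same parity, $p_{k+1,\ell}=0$ whenever $(k+1)+\ell$ is odd. For $q_{k+1,\ell}$ the right-hand side combines $q_{k,\ell\pm 1}$ (same parity argument) with $p_{k,\ell}$; note $(k+1)+\ell$ even forces $k+\ell$ odd, so $p_{k,\ell}=0$ by the inductive hypothesis on (i), and the $q$-terms vanish as well. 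Thus $q_{k+1,\ell}=0$ when $(k+1)+\ell$ is even. For (ii), the same kind of index-tracking works: if $\ell>k+1$ then every index appearing on the right-hand side of the recursion for $p_{k+1,\ell}$ or $q_{k+1,\ell}$ exceeds $k$ (using $\ell-1\ge k+1>k$ and $\ell+1>k$, and noting the boundary cases $\ell=n-1$ separately), so by the inductive hypothesis all those terms vanish; one must also check the endpoint $\ell=k+1$ is handled by (iii). For (iii), the top nonzero entry: from $p_{k+1,k+1}=(k+1)\,p_{k,k}$ and the inductive hypothesis $p_{k,k}=k!$ we get $p_{k+1,k+1}=(k+1)!$; the base case $p_{2,2}=2=2!$ holds. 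Similarly $q_{k+1,k}$: from the recursion $q_{k+1,k}=p_{k,k}+k\,q_{k,k-1}+\cdots$ where the trailing terms vanish by (ii), so $q_{k+1,k}=p_{k,k}+k\cdot q_{k,k-1}$, and combined with $q_{k,k-1}=k!$ (inductive hypothesis, shifted) and $p_{k,k}=k!$ this gives $q_{k+1,k}=k!+k\cdot k!=(k+1)!$ — one should double-check the precise form of the relevant recursion at this boundary index, but the arithmetic is routine.

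\medskip

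The substantive items are (iv) and (v): the claim that, after factoring the forced power $\tau^s$, the remaining coefficient is a polynomial in $n$ of degree $m\ge s$ with positive leading coefficient. By (i) the power of $\tau$ in $p_{k,\ell}$ is determined: writing $k-\ell=2s$, each application of a recursion step either introduces one factor of $\tau$ (the terms $(n-1-\ell)\tau p_{k,\ell+1}$, which lower $\ell$ relative to... actually raise the $\tau$-count) or not (the terms $\ell\cdot p_{k,\ell-1}$); tracking this against parity shows the total $\tau$-exponent equals $s$ exactly. So I define $\sigma^m_{k,\ell}(n):=p_{k,\ell}/\tau^s$ and $\omega^m_{k,\ell}(n):=q_{k,\ell}/\tau^s$ and must show these are polynomials in $n$ with positive leading coefficients and degree at least $s$. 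The recursions translate, after dividing by the appropriate power of $\tau$, into recursions among the $\sigma$'s and $\omega$'s with polynomial-in-$n$ coefficients: e.g.\ $\sigma_{k+1,\ell}=\ell\,\sigma_{k,\ell-1}+(n-1-\ell)\,\sigma_{k,\ell+1}$ (the $\tau$-bookkeeping works out because $k-\ell$ and $(k+1)-(\ell+1)$ differ from... I will verify the exponents match). Since a product/sum of polynomials in $n$ with positive leading coefficients again has a positive leading coefficient, and since $(n-1-\ell)$ contributes a factor that raises the degree, an induction gives that $\sigma_{k,\ell}$ and $\omega_{k,\ell}$ are polynomials in $n$ with positive leading coefficients. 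The degree bound $m\ge s$ comes from counting: each of the $s$ "$\tau$-producing" steps along the recursion path also multiplies in a linear factor $(n-1-\ell')$, so at least $s$ such linear factors accumulate. For $\omega$, the extra subtlety is the inhomogeneous term $p_{k,\ell}$ appearing in the $q$-recursion; here one checks that $p_{k,\ell}/\tau^s$ has degree $m\ge s$ by (iv), so it does not destroy the positivity or degree bound when added. The base cases $k=2$ are immediate from the explicit values.

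\medskip

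The main obstacle I anticipate is the $\tau$-exponent bookkeeping in (iv)--(v): one must show that the power of $\tau$ dividing $p_{k,\ell}$ (resp.\ $q_{k,\ell}$) is \emph{exactly} $\lfloor(k-\ell)/2\rfloor$, not merely at least that, so that the quotient is a genuine polynomial in $n$ and not a rational function or a polynomial in $\tau$ as well. This requires checking that the two branches of each recursion ($\ell\mapsto\ell-1$ without a $\tau$, and $\ell\mapsto\ell+1$ with a $\tau$) are "parity-coherent": starting from the support of the initial data and the parity constraint (i), every monomial that survives to $p_{k,\ell}$ carries the same $\tau$-power. The cleanest way to organize this is to prove, \emph{as part of the same induction}, the auxiliary statement that $p_{k,\ell}$ is a monomial in $\tau$ of the predicted degree with polynomial-in-$n$ coefficient (and likewise for $q$), so that items (iv) and (v) are just a restatement together with the positivity/degree claims. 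Everything else is careful but routine index manipulation in the recursions of Lemma~\ref{lem-coeff}.
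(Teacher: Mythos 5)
Your proposal is correct and follows essentially the same route as the paper: a simultaneous induction on $k$ driven by the recursions of Lemma~\ref{lem-coeff}, with the parity argument for (i), index tracking for (ii), the boundary recursions plus (ii) for (iii), and the exact monomial form $\sigma^m_{k,\ell}(n)\tau^s$ carried as the induction hypothesis for (iv)--(v). The ``parity coherence'' of the $\tau$-exponent that you flag as the main obstacle is resolved in the paper exactly as you suggest --- the statement of (iv)--(v) is itself the monomial claim, and the two branches of the recursion contribute $\tau^s$ and $\tau\cdot\tau^{s-1}$ respectively, so the induction closes with no extra auxiliary statement needed.
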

\begin{proof}
(i) Analogous to the one given for Proposition~\ref{prop-coeff3}-(i).

\vspace{.1cm}
\noindent
(ii) We proceed by induction on $k$. The result is true if $k=2$, by the initial conditions in Lemma~\ref{lem-coeff}.
Suppose now that, for any $\ell>k$, we have $p_{k,\ell}=0$. In this setting, for
any $\ell>k+1$, we have from Lemma~\ref{lem-coeff} that $p_{k+1,\ell}=a p_{k,\ell-1}+b \tau p_{k,\ell+1}$,
where $a$ and $b$ are positive integers. Since $\ell+1>\ell-1>k$, it follows from  the inductive hypothesis
that $p_{k+1,\ell}=0$. The assertion on $q_{k+1,\ell}$ is proved analogously by using induction
and the result just proved for $p_{k+1,\ell}$.

\vspace{.1cm}
\noindent
(iii) We proceed by induction on $k$. For $k=2$ the result is immediate.
Suppose that it is  true for $k\ge 2$. Under these conditions,
one concludes that the result is true for $k+1$ by applying the recursive formulas
of Lemma~\ref{lem-coeff} and the proved item~(ii).

\vspace{.1cm}
\noindent
(iv) We proceed by induction on $k$: suppose that for any $\ell$ such that $k-\ell=2s$ the result is true. Then,
\begin{eqnarray*}
p_{k+1,\ell+1} &=& (\ell+1)p_{k,\ell}+(n-2-\ell)\tau p_{k,\ell+2}\\
            &=& (\ell+1)\sigma_{k,\ell}^{m_1}(n)\tau^s+(n-2-\ell)\tau\sigma_{k,\ell+2}^{m_2}(n)\tau^{s-1}\\
            &=& \sigma_{k+1,\ell+1}^m(n)\tau^s,
\end{eqnarray*}
where $\sigma_{k+1,\ell+1}^m$ is the polynomial of degree $m=\max\{m_1,m_2\}>s$ defined by
$$\sigma_{k+1,\ell+1}^m=(\ell+1)\sigma_{k,\ell}^{m_1}+(n-2-\ell)\sigma_{k,\ell+2}^{m_2}.$$
Clearly, the leading coefficient of $\sigma_{k+1,\ell+1}^m$ is positive. Therefore, the result
is true for $k+1.$

\vspace{.1cm}
\noindent
(v) Analogous to (iv).
\end{proof}


Now, we proceed to determine the rank of the  matrix
$Z=[-P_\tau|M]$ defined in~\eqref{eq-Zmatrix}.
It will be convenient reinterpret the recursive formulas of Lemma \ref{lem-coeff} in vectorial form. With that in mind, we  consider the decomposition $\R^{2n}=\R^n\times\R^n$ and  set $e_1=(1,\dots, 0)\in\R^n$. From the definition \eqref{LK} of the row vectors  $\mathfrak L_k$, after a straightforward computation we have that the row vectors $\mathfrak L_k$ relate as
$$\mathfrak L_{k}=\mathfrak L_{k-1} Q=(e_1,0)Q^{k+1}, \,\,\, k\ge 2,$$

\noindent where  $Q$ is the $2n\times2n$ matrix defined by
\begin{equation}\label{kac+}
Q=\left[\begin{array}{c|c}
\mathcal K&I_n\\[1ex]
\hline
\\[-1.5ex]
O_n& \mathcal K
\end{array}\right],
\end{equation}
being the blocks $0_n$ and $I_n$  the null and identity $n\times n$ matrices, respectively,
and $\mathcal K=(k_{ij})$  the $n\times n$ matrix defined by the equalities
$$
k_{ij}=\left\{\begin{array}{cl}
(n-j)\tau&\text{if}\ j=i-1,\\[1ex]
i&\text{if}\ j=i+1,\\[1ex]
0&\text{otherwise},
\end{array}\right.
$$
that is,
\begin{equation*}
\mathcal K=\left[\begin{array}{ccccccc}
0&1&0&&0&0&0\\
(n-1)\tau&0&2&\cdots&0&0&0\\
0&(n-2)\tau&0&&0&0&0\\
&\vdots&&\ddots&&\vdots\\
0&0&0&&0&n-2&0\\
0&0&0&\cdots&2\tau&0&n-1\\
0&0&0&&0&\tau&0
\end{array}\right].
\end{equation*}

\begin{remark}
In the case $\tau=1$, the transpose of $\mathcal K$ is known as the \emph{Kac} or
\emph{Sylvester-Kac matrix}.
Due to that nomenclature, for $n\ge 2$ and $\tau\ne 0$, we shall call $\mathcal K$ the $\tau$-\emph{Kac matrix}
of order $n$. We add that the Kac matrix appears in many different contexts as, for example,
in the description of  random walks on a hypercube (see~\cite{edelman-kostlan} and the references therein).
\end{remark}

\begin{lemma}\label{kac-lemma}
The $\tau$-Kac matrix  $\mathcal K$ of order $n$ has the following properties:
\begin{itemize}[parsep=1ex]
\item[\rm (i)] It has $n$ simple eigenvalues $\lambda_0,\dots ,\lambda_{n-1}$, which are
$$\lambda_{\ell}=(n-1-2\ell)\sqrt\tau, \quad \ell\in\{0,1,\dots,n-1\}. $$
In particular $\lambda_{\ell}$ is real if $\tau>0$, and purely imaginary if $\tau<0$.
\item[\rm (ii)] Its rank is $n$, if  $n$ is even, and $n-1$ if $n$ is odd. In particular,
$\mathcal K$ is nonsingular if and only if $n$ is even.
\item[\rm (iii)] The coordinates of $e_1=(1,0,\dots,0)\in\R^n$ with respect to
the basis of its eigenvectors  are all different from zero.
\end{itemize}
\end{lemma}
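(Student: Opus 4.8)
The plan is to establish the three properties of the $\tau$-Kac matrix $\mathcal K$ by a direct spectral analysis, exploiting the tridiagonal structure and a rescaling that reduces the problem to the classical Sylvester--Kac matrix. First I would observe that $\mathcal K$ is a tridiagonal matrix with zero diagonal, subdiagonal entries $(n-1)\tau,(n-2)\tau,\dots,\tau$ (in positions $(2,1),(3,2),\dots,(n,n-1)$) and superdiagonal entries $1,2,\dots,n-1$. For $\tau\ne 0$, conjugating $\mathcal K$ by the diagonal matrix $S=\mathrm{diag}(1,\sqrt\tau,\tau,\dots,\tau^{(n-1)/2})$ (i.e.\ $S_{ii}=(\sqrt\tau)^{\,i-1}$, using a fixed branch of $\sqrt\tau$, which may be purely imaginary when $\tau<0$) turns $S^{-1}\mathcal K S$ into $\sqrt\tau$ times the matrix whose $(i,i+1)$ entry is $i$ and whose $(i+1,i)$ entry is $n-i$; the transpose of that matrix is exactly the classical Sylvester--Kac matrix of order $n$. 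Since a matrix and its transpose have the same eigenvalues, and conjugation preserves eigenvalues, item (i) reduces to the well-known fact that the Sylvester--Kac matrix of order $n$ has the simple eigenvalues $n-1,n-3,\dots,-(n-1)$; multiplying by $\sqrt\tau$ gives $\lambda_\ell=(n-1-2\ell)\sqrt\tau$, and these are distinct because $\sqrt\tau\ne 0$. I would cite this eigenvalue fact (it goes back to Sylvester and Kac; see e.g.\ \cite{edelman-kostlan}) rather than reprove it, or alternatively sketch the standard generating-function / three-term-recursion argument for the characteristic polynomial if a self-contained proof is wanted.

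Next, for item (ii), the eigenvalues listed in (i) are all nonzero precisely when none of the integers $n-1-2\ell$, $\ell\in\{0,\dots,n-1\}$, vanishes, i.e.\ when $n-1$ is odd, that is, when $n$ is even; in that case $\mathcal K$ is nonsingular and has rank $n$. When $n$ is odd, exactly one eigenvalue equals $0$ (the one with $\ell=(n-1)/2$), and since it is a simple eigenvalue the kernel is one-dimensional, so the rank is $n-1$. This follows immediately once (i) is in hand.

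For item (iii), the point is that $e_1$ has nonzero component along every eigenvector of $\mathcal K$. The clean way to see this is via the transpose (or, equivalently, the left eigenvectors of $\mathcal K$): write $v_\ell$ for an eigenvector of $\mathcal K^{\!\top}$ with eigenvalue $\lambda_\ell$, so that the coordinate of $e_1$ in the eigenbasis of $\mathcal K$ is proportional to $\langle e_1, v_\ell\rangle$, which is just the first entry of $v_\ell$. Because $\mathcal K^{\!\top}$ is tridiagonal with all sub- and super-diagonal entries nonzero (here $\tau\ne 0$ is used again), no eigenvector can have its first entry equal to zero: the eigenvector equation $(\mathcal K^{\!\top}v)_i=\lambda v_i$ read off the last row gives $v_{n-1}$ in terms of $v_n$, then each successive row upward expresses $v_i$ as a nonzero combination of $v_{i+1}$ and $v_{i+2}$, so $v_1=0$ would force $v_2=v_3=\dots=0$ by downward induction, contradicting $v\ne 0$; equivalently, the first entry cannot vanish because then the whole vector would vanish. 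Hence every coordinate of $e_1$ in the eigenbasis is nonzero. The same conclusion can be phrased via the well-known explicit formulas for the Sylvester--Kac eigenvectors in terms of Krawtchouk or Hahn polynomials, whose leading entry never vanishes.

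The main obstacle is really bookkeeping rather than conceptual: getting the diagonal rescaling exactly right so that $S^{-1}\mathcal K S$ lands on (a scalar multiple of) the standard Sylvester--Kac matrix or its transpose, being careful with the choice of branch of $\sqrt\tau$ when $\tau<0$ (so that the purely-imaginary eigenvalues $(n-1-2\ell)\sqrt\tau$ come out correctly), and making sure the orientation of sub- versus super-diagonal matches the convention in the cited literature. Once the reduction to the classical Kac matrix is correctly set up, parts (i)--(iii) follow from standard facts about that matrix together with the elementary tridiagonal argument for (iii).
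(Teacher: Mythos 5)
Your proof is correct, but it takes a genuinely different route from the paper's. For (i), the paper does not conjugate to the classical Sylvester--Kac matrix; instead it adapts the Edelman--Kostlan argument directly to the $\tau$-deformed setting, realizing $\mathcal K$ as the matrix of $d/dx$ acting on the $n$-dimensional space $V$ spanned by $\omega_\ell(x)=s_\tau^\ell(x)c_\tau^{n-1-\ell}(x)$, and exhibiting the explicit eigenfunctions $e^{(n-1-2\ell)\sqrt\tau\,x}=(c_\tau+\sqrt\tau s_\tau)^{n-1-\ell}(c_\tau-\sqrt\tau s_\tau)^\ell\in V$; your diagonal similarity $S=\mathrm{diag}((\sqrt\tau)^{i-1})$ reducing $\mathcal K$ to $\sqrt\tau$ times the classical Kac matrix is a clean and correct alternative, at the cost of importing the classical spectrum as a cited fact rather than deriving it. The real divergence is in (iii): the paper gets it almost for free from its functional model, since $e_1$ corresponds to $\omega_0=c_\tau^{n-1}=2^{-(n-1)}\sum_\ell\binom{n-1}{\ell}e^{(n-1-2\ell)\sqrt\tau x}$, so the coordinates are explicit nonzero binomial coefficients; you instead pass to left eigenvectors and use that an eigenvector of an irreducible tridiagonal matrix (all sub- and super-diagonal entries nonzero, which needs $\tau\ne 0$) cannot have vanishing first entry. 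Your argument for (iii) is more robust --- it needs no explicit eigenvectors and works for any irreducible tridiagonal matrix --- though your description of the recursion's direction is slightly garbled (to kill the first entry you should run the rows \emph{downward}: row $1$ forces $v_2=0$ from $v_1=0$, row $i$ then forces $v_{i+1}=0$, using that the coefficient $(n-i)\tau$ of $v_{i+1}$ is nonzero); the conclusion you state is nevertheless the right one. You also implicitly use that simple eigenvalues guarantee the pairing of left and right eigenvectors is nondegenerate, which is fine since it follows from (i). Part (ii) is handled identically in both proofs.
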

\begin{proof}
We shall show (i) through  a straight adaptation of the beautiful proof  given for~\cite[Theorem 2.1]{edelman-kostlan}.
To that end, we first define the  functions (seen as vectors)
\[
\omega_\ell(x)=s_\tau^\ell(x)c_\tau^{n-1-\ell}(x), \quad \ell\in\{0,\dots,n-1\},
\]
where $s_{\tau}$ and $c_{\tau}$ are the functions defined in \eqref{def-sc}.
Clearly, the set $\mathcal B=\{\omega_0,\dots,\omega_{n-1}\}$ is linearly independent, and so
it generates a vector space $V$ of dimension $n$. In addition, we have that
$$\frac{d}{d x}\omega_\ell(x)=\ell\omega_{\ell-1}(x)+(n-1-\ell)\tau\omega_{\ell+1},$$
from which we conclude that $d/dx$ is an operator on $V$ whose matrix with respect to the basis
$\mathcal B$ is the  $\tau$-Kac matrix $\mathcal K$.

Now, considering the equality
\[
e^{(n-1-2\ell)\sqrt\tau x}=(c_\tau(x)+\sqrt\tau s_\tau(x))^{n-1-\ell}(c_\tau(x)-\sqrt\tau s_\tau(x))^\ell,
\]
we have that, for each $\ell\in\{0,\dots,n-1\}$,
the function $x\mapsto e^{(n-1-2\ell)\sqrt\tau x}$
belongs to $V$, and it is an eigenvector of ${d}/{dx}$ with eigenvalue $(n-1-2\ell)\sqrt\tau$.
This proves (i).

The statement (ii) follows directly from (i).

Finally, to prove (iii), we identify $\R^n$ with $V,$ so that $e_1$
becomes the first vector  $\omega_0(x)=c_\tau^{n-1}(x)$ of $\mathcal B$.
Since $c_\tau(x)=\frac 12\left(e^{\sqrt\tau x}+e^{-\sqrt\tau x}\right)$,
we have from the binomial formula that
$$c_\tau^{n-1}(x)=\frac1{2^{n-1}}\sum_{\ell=0}^{n-1} \binom{n-1}{\ell}e^{(n-1-2\ell)\sqrt\tau x},$$
which clearly proves~(iii).
\end{proof}

Let $\{v_0,\dots, v_{n-1}\}\subset\R^n$ be the basis of eigenvectors of
the $\tau$-Kac matrix $\mathcal K$. Consider the decomposition $\R^{2n}=\R^n\times\R^n$ and
define the following vectors:
\begin{equation} \label{eq-xl&yl}
x_\ell=(v_\ell,0) \quad\text{and}\quad y_\ell=(0,v_\ell), \,\,\, \ell\in\{0,\dots, n-1\}.
\end{equation}

As a direct consequence of
Lemma~\ref{kac-lemma} and the block structure of the $2n\times 2n$ matrix $Q$ defined in~\eqref{kac+}, we have:

\begin{corollary} \label{kac-cor}
The following assertions hold true.
\begin{itemize}[parsep=1ex]
\item[\rm (i)] The matrix $Q$ is nonsingular if and only if $n$ is even.
\item[\rm (ii)] For any $\ell\in\{0,\dots,n-1\}$, the vectors $x_\ell$ and $y_\ell$ defined in~\eqref{eq-xl&yl}
satisfy:
$$
x_\ell Q=\lambda_\ell x_\ell+y_\ell \quad\text{and}\quad y_\ell Q=\lambda_\ell y_\ell,
$$
i.e. $x_\ell$ is a generalized eigenvector of $Q$, whereas $y_\ell$ is an eigenvector of $Q$.
\item[\rm (iii)] Regarding the coordinates of $e_1=(e_1,0)\in\R^{2n}$ with respect to the basis
$$\mathcal B=\{x_0,\dots, x_{n-1}, y_0,\dots, y_{n-1}\},$$
those with respect to the generalized eigenvectors
$x_\ell$ never vanish, whereas the ones with respect to the eigenvectors
$y_\ell$ are all zero.
\end{itemize}
\end{corollary}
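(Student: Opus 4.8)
The plan is to read off all three assertions directly from the block upper‑triangular form of $Q$ in~\eqref{kac+}, combined with the spectral description of the $\tau$-Kac matrix $\mathcal K$ provided by Lemma~\ref{kac-lemma}. None of the three items requires new ideas; they are bookkeeping consequences of that lemma and the $2\times 2$ block structure $\R^{2n}=\R^n\times\R^n$.

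For (i), note that $Q$ is block upper triangular with both diagonal blocks equal to $\mathcal K$, so $\det Q=(\det\mathcal K)^2$. By Lemma~\ref{kac-lemma}(i)--(ii), $\mathcal K$ is invertible if and only if $n$ is even, and hence so is $Q$. (Equivalently, when $n$ is odd any nonzero $v\in\ker\mathcal K$ yields $(0,v)\in\ker Q$.)

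For (ii), the verification is a single block multiplication. Viewing $x_\ell=(v_\ell,0)$ and $y_\ell=(0,v_\ell)$ as row vectors in the splitting $\R^{2n}=\R^n\times\R^n$, and using that $v_\ell$ is a $\lambda_\ell$-eigenvector of $\mathcal K$, one gets $x_\ell Q=(v_\ell\mathcal K,\,v_\ell)=(\lambda_\ell v_\ell,\,v_\ell)=\lambda_\ell x_\ell+y_\ell$ and $y_\ell Q=(0,\,v_\ell\mathcal K)=\lambda_\ell y_\ell$. Thus $y_\ell$ is a genuine eigenvector of $Q$, while $x_\ell$ is a generalized eigenvector of rank two: $(Q-\lambda_\ell I_{2n})x_\ell=y_\ell\ne 0$ but $(Q-\lambda_\ell I_{2n})^2x_\ell=0$. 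It only remains to observe that $\mathcal B=\{x_0,\dots,x_{n-1},y_0,\dots,y_{n-1}\}$ is a basis of $\R^{2n}$; this is immediate because $\{v_0,\dots,v_{n-1}\}$ is a basis of $\R^n$ by Lemma~\ref{kac-lemma}(i), so the $x_\ell$ span the first summand and the $y_\ell$ the second.

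For (iii), decompose $e_1=(e_1,0)\in\R^{2n}$ in the basis $\mathcal B$. Since this vector has vanishing second component while each $y_\ell=(0,v_\ell)$ has vanishing first component, all its coordinates along the $y_\ell$ are zero, and its coordinates along the $x_\ell$ are exactly the coordinates of $e_1\in\R^n$ relative to the eigenvector basis $\{v_0,\dots,v_{n-1}\}$ of $\mathcal K$. By Lemma~\ref{kac-lemma}(iii) these are all nonzero, which is the assertion. The only step requiring a little care is the block bookkeeping in (ii) --- correctly identifying the generalized‑eigenvector chains of $Q$ and confirming that $\mathcal B$ is a basis --- but this presents no genuine obstacle given Lemma~\ref{kac-lemma}.
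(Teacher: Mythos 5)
Your proof is correct and is exactly the argument the paper intends: the corollary is stated there without proof as ``a direct consequence of Lemma~\ref{kac-lemma} and the block structure of $Q$'', and your block-triangular determinant computation, the row-vector block multiplication for (ii), and the splitting argument for (iii) are precisely the details being left to the reader. The only (harmless) imprecision you inherit from the paper is the tacit identification of left and right eigenvectors of $\mathcal K$ (and the fact that the eigenbasis is complex when $\tau<0$), which does not affect the argument since the eigenvalues are simple.
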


Next, we establish a result that
plays a crucial role in the proof of Proposition~\ref{prop-mainlinear}.
In its proof, we shall consider a certain type of generalized Vandermonde matrix as defined
in~\cite{kalman}. First, let us recall that,
given $n$ pairwise distinct numbers
$x_0,\dots ,x_{n-1}$, the $n\times n$ Vandermonde matrix
$V(x_0,\dots,x_{n-1})$ is defined by
\[
V(x_0,\dots,x_{n-1})=
\begin{bmatrix}
1 & x_0 & x_0^2 & x_0^3 & \dots & x_0^{n-1}\\[1ex]
1 & x_1 & x_1^2 & x_1^3 & \dots & x_1^{n-1}\\[1ex]
\vdots & \vdots & \vdots & \vdots & \vdots & \vdots\\[1ex]
1 & x_{n-1} & x_{n-1}^2 & x_{n-1}^3 & \dots & x_{n-1}^{n-1}
\end{bmatrix},
\]
and its determinant is given by
\[
\det V(x_0,\dots, x_{n-1})=\prod_{i<j}(x_j-x_i),
\]
which implies that $V(x_0,\dots,x_{n-1})$ is nonsingular.

The generalized Vandermonde matrix
$V_2(x_0,\dots, x_{n-1})$ of type $2$ is the following  $2n\times 2n$ matrix:
\[
V_2(x_0,\dots,x_{n-1})=
\begin{bmatrix}
1 & x_0 & x_0^2 & x_0^3 & \dots & x_0^{2n-1}\\[1ex]
0 & 1 & 2x_0 & 3x_0^2 & \dots & (2n-1)x_0^{2n-2}\\[1ex]
1 & x_1 & x_1^2 & x_1^3 & \dots & x_1^{2n-1}\\[1ex]
0 & 1 & 2x_1 & 3x_1^2 & \dots & (2n-1)x_1^{2n-2}\\[1ex]
\vdots & \vdots & \vdots & \vdots & \vdots & \vdots\\[1ex]
1 & x_{n-1} & x_{n-1}^2 & x_{n-1}^3 & \dots & x_{n-1}^{2n-1}\\[1ex]
0 & 1 & 2x_{n-1} & 3x_{n-1}^2 & \dots & (2n-1)x_{n-1}^{2n-2}
\end{bmatrix}.
\]

Notice that each $x_i$ defines a pair of row vectors
$L_i$, $L_{i+1}$ which satisfy
\[
L_{i+1}=\frac{\partial L_{i}}{\partial x_i}\cdot
\]

It can be proved that (see~\cite{kalman})
\[
\det V_2(x_0,\dots, x_{n-1})=\prod_{i<j}(x_j-x_i)^4>0,
\]
which implies that $V_2(x_0,\dots, x_{n-1})$ is nonsingular as well.

\begin{proposition}\label{prop-crucialrole}
Setting $e_1=(e_1,0)\in\R^{2n}$,
the following assertions hold.
\begin{itemize}[parsep=1ex]
\item[\rm (i)] If \,$n$ is even, for any positive integer $s$, the set
$$\{e_1Q^{s}, e_1Q^{s+1}, \dots ,e_1Q^{s+2n-1}\}$$
is linearly independent.
\item[\rm (ii)] If \,$n$ is odd, let $s\geq 2n$ and define
$$\Lambda=\{e_1Q^{2}, e_1Q^{3},\dots,e_1Q^{2n-1}\}, \quad \Lambda_s=\Lambda\cup\{e_1Q^s\}.$$
Then, denoting by
$C_1, \dots ,C_{2n}$ the column vectors of the matrix $Z(s)$ whose rows are
the vectors of $\Lambda_s$, the  following hold:
\begin{itemize}[parsep=1ex]
\item[\rm(a)] $\Lambda$ is linearly independent, whereas $\Lambda_s$ is linearly dependent;
\item[\rm(b)] $C_1$ is in the span of the odd columns $C_3, C_5, \dots, C_n$;
\item[\rm(c)] $C_{n+1}$ is in the span of the even columns $C_{n+3}, C_{n+5},\dots,C_{2n}$.
\end{itemize}
\end{itemize}
\end{proposition}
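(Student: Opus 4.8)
The strategy is to diagonalize (or Jordan-decompose) the action of $Q$ on $\R^{2n}$ using the eigenstructure of the $\tau$-Kac matrix $\mathcal K$ from Lemma~\ref{kac-lemma}, and then reduce each assertion to a statement about a generalized Vandermonde matrix, which is nonsingular by the formula $\det V_2(x_0,\dots,x_{n-1})=\prod_{i<j}(x_j-x_i)^4$. Throughout, we write $e_1=(e_1,0)\in\R^{2n}$ and expand it in the basis $\mathcal B=\{x_0,\dots,x_{n-1},y_0,\dots,y_{n-1}\}$ of Corollary~\ref{kac-cor}; by part (iii) of that corollary we have $e_1=\sum_{\ell=0}^{n-1}a_\ell x_\ell$ with every $a_\ell\ne 0$ and no $y_\ell$-component.

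\textbf{Part (i): $n$ even.} From Corollary~\ref{kac-cor}(ii), a one-step induction gives $x_\ell Q^m=\lambda_\ell^m x_\ell+m\lambda_\ell^{m-1}y_\ell$ for all $m\ge 0$ (the usual formula for a $2\times 2$ Jordan block with eigenvalue $\lambda_\ell$). Hence
\[
e_1Q^m=\sum_{\ell=0}^{n-1}a_\ell\bigl(\lambda_\ell^m x_\ell+m\lambda_\ell^{m-1}y_\ell\bigr).
\]
When $n$ is even, Lemma~\ref{kac-lemma}(ii) tells us that $\mathcal K$ is nonsingular, so every $\lambda_\ell\ne 0$, and the $\lambda_\ell$ are pairwise distinct. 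Stacking the $2n$ vectors $e_1Q^{s},\dots,e_1Q^{s+2n-1}$ as rows and expressing them in the basis $\mathcal B$, the resulting $2n\times 2n$ coordinate matrix factors as a column-scaling by the nonzero numbers $a_\ell$ composed with a matrix whose rows are, for each $\ell$, the pair $(\lambda_\ell^{s+j}, (s+j)\lambda_\ell^{s+j-1})_{j=0}^{2n-1}$ -- i.e.\ row $L_i$ for the node $x_i=\lambda_\ell$ together with $\partial L_i/\partial x_i$. After factoring out the common powers $\lambda_\ell^{s}$ (all nonzero) from each block of two rows, this is exactly $V_2(\lambda_0,\dots,\lambda_{n-1})$ up to the elementary column operations that turn the $s+j$ into $j$; its determinant is $\prod_{i<j}(\lambda_j-\lambda_i)^4>0$, so the family is linearly independent.

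\textbf{Part (ii): $n$ odd.} Now $\mathcal K$ has a simple zero eigenvalue, say $\lambda_{n_0}=0$ where $n_0=(n-1)/2$, while all other $\lambda_\ell\ne 0$. For (a), observe first that $x_{n_0}Q^m = y_{n_0}$ for all $m\ge 1$ and $y_{n_0}Q=0$, so the contribution of the index $n_0$ to $e_1Q^m$ is the \emph{constant} vector $a_{n_0}y_{n_0}$ for every $m\ge 1$; combined with $e_1Q^m=\sum_{\ell\ne n_0}a_\ell(\lambda_\ell^m x_\ell+m\lambda_\ell^{m-1}y_\ell)+a_{n_0}y_{n_0}$ for $m\ge1$, the span of $\Lambda\cup\{e_1Q^s\}$ lies in the $(2n-1)$-dimensional space spanned by $\{x_\ell,y_\ell:\ell\ne n_0\}\cup\{y_{n_0}\}$, so $\Lambda_s$ (with $2n$ vectors) is linearly dependent. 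Linear independence of $\Lambda$ follows by the same Vandermonde argument as in (i) applied to the $2n-2$ nonzero eigenvalues together with the single extra constant direction $y_{n_0}$: the $(2n-1)\times(2n-1)$ coordinate matrix of $\Lambda$ in the reduced basis is block-diagonal with a $V_2$-type block of size $2(n-1)$ for the nonzero $\lambda_\ell$ and a $1\times 1$ block for $y_{n_0}$ whose entry is $a_{n_0}\ne 0$, hence invertible. For (b) and (c), recall that the columns $C_1,\dots,C_n$ of $Z(s)$ record the $p$-coefficients (the $\R^n$-factor) and $C_{n+1},\dots,C_{2n}$ the $q$-coefficients; by Proposition~\ref{prop-p&q}(i), $p_{k,\ell}=0$ whenever $k+\ell$ is odd and $q_{k,\ell}=0$ whenever $k+\ell$ is even, which means that along each fixed row the nonzero $p$-entries all sit in columns of one parity and the nonzero $q$-entries in columns of the other. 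Translating this parity pattern through the change of basis $\mathcal B$: the vector $e_1$ lies in the span of the $x_\ell$, and iterating $Q$ shuffles the $x$- and $y$-components in a way that respects the checkerboard pattern, so in $Z(s)$ the first $p$-column $C_1$ is forced to be a linear combination of the remaining $p$-columns of matching parity, namely the odd-indexed $C_3,C_5,\dots,C_n$, and likewise $C_{n+1}$ is a combination of $C_{n+3},C_{n+5},\dots,C_{2n}$; the precise coefficients come from solving the small triangular/Vandermonde system determined by the eigenvalues, and nonvanishing of the pivots is again guaranteed by Corollary~\ref{kac-cor}(iii) and the distinctness of the $\lambda_\ell$.

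\textbf{Main obstacle.} The computational heart -- and the step most likely to require care -- is making the reduction to $V_2$ fully rigorous: one must track how the elementary column operations sending $\lambda_\ell^{s+j}$ to $\lambda_\ell^{j}$ interact with the two-row blocks (the derivative rows $\partial L_i/\partial x_i$ pick up extra terms under such operations), and verify that after all reductions the determinant is a nonzero multiple of $\prod_{i<j}(\lambda_j-\lambda_i)^4$ rather than vanishing accidentally. In the odd case the bookkeeping is further complicated by the degenerate $\lambda_{n_0}=0$ block and by the need to match the abstract parity statement from Proposition~\ref{prop-p&q}(i) with the concrete column dependencies (b)--(c); isolating the zero eigenvalue at the outset and treating its $1\times 1$ block separately is what keeps this manageable.
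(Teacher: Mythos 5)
Your Part (i) is essentially the paper's argument (Jordan-block formula $x_\ell Q^m=\lambda_\ell^m x_\ell+m\lambda_\ell^{m-1}y_\ell$, expansion of $e_1$ with nonvanishing coefficients, reduction to the nonsingularity of $V_2(\lambda_0,\dots,\lambda_{n-1})$), and it goes through. But Part (ii)(a) as written is broken by two concrete errors. First, $x_{n_0}Q^m$ is \emph{not} $y_{n_0}$ for all $m\ge 1$: since $\lambda_{n_0}=0$ one has $x_{n_0}Q=y_{n_0}$ but $x_{n_0}Q^2=y_{n_0}Q=\lambda_{n_0}y_{n_0}=0$, so the $n_0$-contribution to $e_1Q^m$ \emph{vanishes} for $m\ge 2$ rather than being the constant $a_{n_0}y_{n_0}$. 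Second, $\Lambda_s$ has $2n-1$ vectors, not $2n$ (and $\Lambda$ has $2n-2$, so its coordinate matrix cannot be $(2n-1)\times(2n-1)$ with a $1\times1$ block for $y_{n_0}$). With your counts, ``$2n-1$ vectors inside a $(2n-1)$-dimensional space'' proves nothing. The correct version of your own idea is that for $m\ge2$ every $e_1Q^m$ lies in the $(2n-2)$-dimensional span of $\{x_\ell,y_\ell:\ell\ne n_0\}$, which forces the $2n-1$ vectors of $\Lambda_s$ to be dependent, while the $(2n-2)\times(2n-2)$ coordinate matrix of $\Lambda$ reduces to $V_2$ of the $n-1$ nonzero eigenvalues and is therefore nonsingular. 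This is what the paper does (phrased via the coefficient system $R$ and $R_{2n-1}$), so the gap is repairable, but the proof as stated is wrong.

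Parts (ii)(b) and (c) are not proved. The parity pattern of Proposition~\ref{prop-p&q}(i) only tells you which entries of $Z(s)$ vanish; it cannot by itself force $C_1$ into the span of $C_3,\dots,C_n$. Two substantive facts are needed and are absent from your sketch. First, with $n=2m+1$, the $m+1$ odd-indexed $p$-columns $C_1,C_3,\dots,C_n$ span a space of dimension exactly $m$: this comes from restricting to the block $\mathcal K^j$ of $Q^j$, writing the relevant coefficient system with rows $(\lambda_\ell^2,\lambda_\ell^4,\dots,\lambda_\ell^{2m+2})$, and observing that because only \emph{even} powers occur, the rows for $\pm\lambda_\ell$ coincide and the row for $\lambda_m=0$ vanishes, leaving an $m\times m$ standard Vandermonde of full rank. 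Second --- and this is the step your ``the precise coefficients come from solving the small triangular/Vandermonde system'' glosses over --- one must show that $C_3,\dots,C_n$ are themselves linearly independent, so that it is specifically $C_1$ that is redundant; a rank-$m$ span of $m+1$ vectors only yields \emph{some} dependence, not one expressing $C_1$ in terms of the others. The paper gets this from the ``factorial diagonal'' structure of $Z$ (Proposition~\ref{prop-p&q}(ii)--(iii)): all $p$-entries above the diagonal of entries $2!,3!,\dots,(n-1)!$ vanish. Part (c) requires the analogous computation on the off-diagonal block $j\mathcal K^{j-1}$ of $Q^j$, producing rows $(3\lambda_\ell^2,5\lambda_\ell^4,\dots)$; none of this appears in your argument.
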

\begin{proof}
(i) When $n$ is even, we know from Corollary~\ref{kac-cor}-(i) that $Q$ is invertible. So, it
suffices to prove~(i) for $s=0$. Notice that, for any $\ell\in\{0,\dots, n-1\},$
one has $x_\ell Q^2=\lambda_\ell^2x_\ell+2\lambda_\ell y_\ell$.
Therefore, by induction, one has that the equality
\begin{equation} \label{eq-Qpowerk}
x_\ell Q^k=\lambda_\ell^kx_\ell+k\lambda_\ell^{k-1}y_\ell
\end{equation}
holds  for any positive integer $k$.

Now, consider the following vector equation of variables $c_0,\dots, c_{2n-1}$:
\begin{equation} \label{eq-vectorequation}
\sum_{k=0}^{2n-1}c_ke_1Q^k=0.
\end{equation}
We have from Corollary~\ref{kac-cor}-(iii) that
\begin{equation} \label{eq-e1}
e_1=\sum_{\ell=0}^{n-1}a_\ell x_\ell,
\end{equation}
with $a_\ell\ne 0$ for any $\ell\in\{0,\dots, n-1\}$.

Setting $\overbar x_\ell=a_\ell x_\ell$ and $\overbar y_\ell=a_\ell y_\ell$, we get from
equalities~\eqref{eq-Qpowerk}--\eqref{eq-e1} that
\[
\sum_{\ell=0}^{n-1}\sum_{k=0}^{2n-1}\left(\lambda_\ell^kc_k\overbar x_\ell+k\lambda_\ell^{k-1}c_k\overbar y_\ell\right)=0,
\]
which implies that~\eqref{eq-vectorequation} is equivalent to the homogeneous linear system
of equations:
\begin{equation}  \label{eq-system}
\sum_{k=0}^{2n-1}\lambda_\ell^kc_k=0,\quad\sum_{k=0}^{2n-1}k\lambda_\ell^{k-1}c_k=0,\quad \ell\in\{0,\dots,n-1\}.
\end{equation}

The matrix of coefficients of the system~\eqref{eq-system} is
the generalized Van\-der\-mon\-de matrix $V_2(\lambda_0,\dots,\lambda_{n-1})$,
and so it is invertible, since the eigenvalues
$\lambda_0, \dots, \lambda_{n-1}$ are pairwise distinct. Thus, $c_k=0$ for all $k\in\{0,\dots, 2n-1\}$, which proves~(i).

\vspace{.1cm}
\noindent
(ii)-(a) Assume now that $n$ is odd and consider the following
vector equation of $2n-1$ variables $c_2,\dots, c_{2n-1}, c_s$:
\begin{equation} \label{eq-vectorequation02}
\sum_{k=2}^{2n-1}c_ke_1Q^k+c_se_1Q^s=0.
\end{equation}

Proceeding as in the case $n$ even, we conclude that the equation~\eqref{eq-vectorequation02}
is equivalent to the linear system:
\begin{equation}  \label{eq-system02}
\sum_{k=2}^{2n-1}\lambda_\ell^kc_k+\lambda_\ell ^sc_s=0,\quad\sum_{k=2}^{2n-1}k\lambda_\ell^{k-1}c_k+s\lambda_\ell^{s-1}c_s=0, \quad \ell\in\{0,\dots,n-1\}.
\end{equation}

Since $n$ is odd, we have from Corollary~\ref{kac-cor} that  $\lambda_{(n-1)/{2}}=0$,
so that~\eqref{eq-system02} is a homogeneous linear system of $2n-2$ equations with $2n-1$ unknowns.
The matrix $R$ of coefficients of the system~\eqref{eq-system02} can be decomposed into $n-1$ blocks as
\[
R=\left[
\begin{array}{c}
B_0\\[1ex]
B_1\\
\vdots\\
B_\ell\\
\vdots\\
B_{n-1}
\end{array}\right], \quad \ell\neq\frac{n-1}{2},
\]
where the generic block
$B_\ell$ is the $2\times(2n-1)$ matrix given by
\[
B_\ell=
\begin{bmatrix}
\lambda_\ell^2&\lambda_\ell^3&\cdots&\lambda_\ell^{2n-1}&\lambda_\ell^s\\[1ex]
2\lambda_\ell&3\lambda_\ell^2&\cdots&(2n-1)\lambda_\ell^{2n-2}&s\lambda_\ell^{s-1}
\end{bmatrix}.
\]

It is immediate that ${\rm rank}\,R\le 2n-2$,
proving that $\Lambda_s$ is linearly dependent. To prove that $\Lambda$ is linearly independent,
consider the matrix $R_{2n-1}$ obtained from $R$ by
removing the last of its $2n-1$ columns.
Each block of $R_{2n-1}$ can be modified
through elementary operations on its rows, so that
the row-equivalent resulting matrix $\overbar R_{2n-1}$ is composed
by $n-1$ blocks of $2\times(2n-2)$ matrices of the type
\[
\begin{bmatrix}
1&\lambda_\ell&\lambda_\ell^2&\cdots&\lambda_\ell^{2n-3}\\[1ex]
0&1&2\lambda_\ell&\cdots&(2n-3)\lambda_\ell^{2n-4}
\end{bmatrix},
\]
from which we conclude that $\overbar R_{2n-1}$ is the  generalized Vandermonde
matrix
$$V_2(\lambda_0,\dots,\lambda_{(n-3)/2},\lambda_{(n+1)/2},\dots, \lambda_{n-1}).$$

Therefore, $\overbar R_{2n-1}$ is nonsingular, which implies that
${\rm rank}\,R_{2n-1}=2n-2,$ as we wished to prove.

\vspace{.1cm}
\noindent
(ii)-(b) Set $n=2m+1$.
The matrix composed by the rows $e_1Q^2,\dots,e_1Q^{2n-1}, e_1Q^s$ has $2n$ columns. Since we
are only interested in the first $n$, it will be convenient to work
in $\mathbb R^n$ by identifying $x_\ell$ with $v_\ell$.  We also point out that  $C_1,\ \dots\ ,C_n$
are the columns of the matrix whose rows are $e_1\mathcal K^2,\dots,e_1\mathcal K^{2n-1}, e_1\mathcal K^{s}$,
$s\ge 2n$. As in the above argument, the last row will be immaterial. So, without loss of generality,
we can assume $s=2n$, in which case $C_1,\dots, C_n$ are nothing but the first $n$ columns of the matrix
$Z$ defined in~\eqref{eq-Zmatrix}.

We claim that the span of the set $\{C_{2i+1}\}_{i=0,\dots,m}$ has dimension $m$.
From the considerations of  the preceding paragraph,
it suffices to show that the span of the rows $\{e_1\mathcal K^{2i}\}_{i\in\{1,\dots m+1\}}$
has dimension $m$. Indeed, observing the zero entries of these row vectors as
determined in Proposition~\ref{prop-p&q}, we have to consider just the odd rows,
since we are only interested in the odd columns (cf.~the matrices $Z$ in
Example~\ref{examp-Zmatrices} for the cases $n=3,5$).

As before, we consider the vector equation of
$m+1$ variables $c_1,\dots, c_{m+1}$:
\begin{equation*}
\sum_{j=1}^{m+1}c_je_1\mathcal K^{2j}=0,
\end{equation*}
which is equivalent to the linear system of $2m$ equations with $m+1$ unknowns:
\begin{equation}  \label{eq-system03}
\sum_{j=1}^{m+1}\lambda_\ell^{2j}c_j=0,\quad \ell\in\{0,\dots, n\}.
\end{equation}

The $\ell$-th row of the matrix $R$ of coefficients of the  system~\eqref{eq-system03} is
$$
\left[\begin{array}{cccc}
\lambda_\ell^2&
\lambda_\ell^{4}&
\cdots&
\lambda_\ell^{2m+2}
\end{array}\right].
$$
In addition,  by Lemma~\ref{kac-lemma}, $\lambda_m=0$ and, for any eigenvalue $\lambda\ne 0,$ $-\lambda$
is an eigenvalue as well. Therefore, each one of these rows appears twice, showing that the rank of $R$
is at most $m$.

Now, consider  the $m$ rows which are distinct to each other. Extract the factor $\lambda_\ell^2$
from each one of them and ignore the last column. The resultant matrix is easily seen to
be an $m\times m$ standard Vandermonde matrix, and so it is nonsingular.
Therefore, the rank of $R$ is $m$ and the claim is proved.

To conclude the proof of (ii)-(b), we have just to observe that,
by Proposition~\ref{prop-p&q},
all entries of $Z=[p_{k,\ell},q_{k,\ell}]$ in the
$(2n-1)\times n$ block of the coefficients
$p_{k,\ell}$ which lie above the ``factorial diagonal'' (i.e., the one of entries
$2!, 3!, \dots, (n-1)!$) are zero. Indeed,  this property of
$Z$ clearly implies that the set $\{C_{2i+1}\}_{i=1,\dots,m}$ is linearly independent.
Consequently, $C_1$ is in the span of $\{C_{2i+1}\}_{i=1,\dots,m}$, since
the span of $\{C_{2i+1}\}_{i=0,\dots,m}$ has dimension $m$.

\vspace{.1cm}
\noindent

(ii)-(c) Similarly, we claim that the span of the set $\{C_{n+2i+1}\}_{i=0,\dots,m}$ has dimension $m$.
We start the proof of this claim by noting that, for any $j$, we have
\begin{equation*}
Q^j=\left[\begin{array}{c|c}
\mathcal K^j&j\mathcal K^{j-1}\\[1ex]
\hline
\\[-1.5ex]
0_n& \mathcal K^j
\end{array}\right].
\end{equation*}

As before, we can work in $\mathbb R^n$ and assume $s=2n$. In this setting,
$C_{n+1},\ \dots\ ,C_{2n}$ are the columns of the matrix whose rows are $2e_1\mathcal K,3e_1\mathcal K^2,\dots,2ne_1\mathcal K^{2n-1}$. Since now
we are interested just in even columns,
it is enough to consider just even rows. Therefore, this time the starting vectorial equation is
$$
\sum_{j=1}^{m+1}(2j+1)e_1\mathcal K^{2j}=0,
$$
which leads to the homogeneous linear system whose matrix of coefficients is composed by rows of the type
$$
\left[\begin{array}{cccc}
3\lambda_\ell^2&
5\lambda_\ell^{4}&
\cdots&
(2m+3)\lambda_\ell^{2m+2}
\end{array}\right].
$$

Once again, since just even powers of $\lambda_\ell$ appears, by Lemma~\ref{kac-lemma},
we have exactly one zero row
(that one for $\ell=m$) and that each non-zero row appears twice. Considering then the $m$ rows which are
distinct to each other, we get the  matrix
$$
\left[\begin{array}{cccc}
3\lambda_0^2&
5\lambda_0^{4}&
\cdots&
(2m+3)\lambda_0^{2m+2}\\
\vdots&\vdots&\ddots&\vdots\\
3\lambda_{m-1}^2&
5\lambda_{m-1}^{4}&
\cdots&
(2m+3)\lambda_{m-1}^{2m+2}
\end{array}\right].
$$
For any $\ell$ and $j,$ extract the factors $\lambda^2_\ell$
from the $\ell$-th row and $2j+1$ from the $j$-th column.
In this way, we reduce this matrix to the standard Vandermonde matrix,
showing that it has maximal rank $m$.
\end{proof}

\begin{lemma}\label{lem-powers}
Given $m\in\mathbb N$, $i, j\in\{1,\dots,m\},$ and $h_{ij},b_i,c_j\in\mathbb R$,
consider  the $m\times m$ matrices
$X=(h_{ij}\tau^{b_i+c_j})_{i,j}$ and $Y=(h_{ij})_{i,j}$. Then,
$$\det X=\tau^{(\sum_{i=1}^nb_i+\sum_{j=1}^nc_j)}\det Y.$$
\end{lemma}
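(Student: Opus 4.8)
The plan is to recognize that $X$ is nothing but the matrix $Y$ with its $i$-th row multiplied by $\tau^{b_i}$ and its $j$-th column multiplied by $\tau^{c_j}$. Formally, writing $D_b=\mathrm{diag}(\tau^{b_1},\dots,\tau^{b_m})$ and $D_c=\mathrm{diag}(\tau^{c_1},\dots,\tau^{c_m})$ (well defined since $\tau\ne 0$), one has the factorization $X=D_bYD_c$, because the $(i,j)$ entry of $D_bYD_c$ is $\tau^{b_i}h_{ij}\tau^{c_j}=h_{ij}\tau^{b_i+c_j}$. Then, by the multiplicativity of the determinant, $\det X=\det D_b\,\det Y\,\det D_c$, and since $\det D_b=\prod_{i=1}^m\tau^{b_i}=\tau^{\sum_{i=1}^m b_i}$ and likewise $\det D_c=\tau^{\sum_{j=1}^m c_j}$, the asserted identity follows at once.

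Alternatively, the same conclusion can be reached directly from the Leibniz formula. Indeed,
\[
\det X=\sum_{\sigma\in S_m}\mathrm{sgn}(\sigma)\prod_{i=1}^m h_{i\sigma(i)}\tau^{b_i+c_{\sigma(i)}}
=\sum_{\sigma\in S_m}\mathrm{sgn}(\sigma)\,\tau^{\sum_{i}b_i+\sum_i c_{\sigma(i)}}\prod_{i=1}^m h_{i\sigma(i)}.
\]
For every permutation $\sigma\in S_m$ one has $\sum_{i=1}^m c_{\sigma(i)}=\sum_{j=1}^m c_j$, so the power of $\tau$ is the same constant $\tau^{\sum_i b_i+\sum_j c_j}$ in every term; pulling it out of the sum leaves precisely $\det Y$.

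There is essentially no obstacle here: the statement is a bookkeeping identity about scaling rows and columns of a determinant, and the only point requiring (trivial) care is that $\tau\ne 0$, so that the real powers $\tau^{b_i+c_j}$ and the diagonal matrices $D_b,D_c$ are meaningful; in the intended application $\tau$ is indeed nonzero (and the exponents are nonnegative integers), so this causes no difficulty. We also note that the sums appearing in the statement are to be read as running from $1$ to $m$.
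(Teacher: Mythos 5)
Your proof is correct and takes essentially the same route as the paper's one-line argument, namely extracting the factor $\tau^{b_i}$ from each row and $\tau^{c_j}$ from each column of the determinant (your diagonal-matrix factorization $X=D_bYD_c$ is just a formalization of this, and the Leibniz-formula variant is a harmless bonus). Your remark that the sums in the statement should run to $m$ rather than $n$ is a correct observation about a typo.
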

\begin{proof}
For any $i, j\in\{1,\dots,m\}$, extract $\tau^{b_i}$ from the $i$-th row and $\tau^{c_j}$ from the $j$-th column,
then apply the standard properties of the determinant.
\end{proof}

\begin{remark} \label{remk-lemma}
Any square submatrix of $Z$ can be written as the matrix
$X$ in the statement of Lemma~\ref{lem-powers}. Moreover, for all indexes $i$ and $j$,
both $b_i$ and $c_j$ can be written as $\gamma/2$, where $\gamma$ is a nonnegative
integer. In particular, any minor of $Z$ is either zero or a monomial in $\tau$ of the form
$\mu\tau^{\gamma/2}$,  $\mu\ne 0$.
An easy way to see that is by writing the zeros of $Z$ as a product $0\tau^{p/q}$, where $p$ and $q$ are suitable integers.
Then, we have that
\[
b_i=\frac{i-1}{2}, \quad c_j=\gamma_1(j)/2,
\]
where $\gamma_1(j)/2$ is the power of $\tau$
in the first entry of the $j$-th column.
For instance, when $n=4$,  we can verify this property by rewriting the matrix $Z$ as
(see Example~\ref{examp-Zmatrices}):
$$
\begin{bmatrix}
3\tau&0\tau^{\frac 12}&2&0\tau^{-\frac 12}&0\tau^{\frac 12}&2&0\tau^{-\frac 12}&0\tau^{-1}\\[.5ex]
0\tau^{\frac 32}&7\tau&0\tau^{\frac 12}&6&9\tau&0\tau^{\frac 12}&6&0\tau^{-\frac 12}\\[.5ex]
21\tau^2&0\tau^{\frac 32}&20\tau&0\tau^{\frac 12}&0\tau^{\frac 32}&28\tau&0\tau^{\frac 12}&24\\[.5ex]
0\tau^{\frac 52}&61\tau^2&0\tau^{\frac 32}&60\tau&105\tau^2&0\tau^{\frac 32}&100\tau&0\tau^{\frac 12}\\[.5ex]
183\tau^3&0\tau^{\frac 52}&182\tau^2&0\tau^{\frac 32}&0\tau^{\frac 52}&366\tau^2&0\tau^{\frac 32}&360\tau\\[.5ex]
0\tau^{\frac 72}&547\tau^3&0\tau^{\frac 52}&546\tau^2&1281\tau^3&0\tau^{\frac 32}&1274\tau^2&0\tau^{\frac 32}\\[.5ex]
1641\tau^4&0\tau^{\frac 72}&1640\tau^3&0\tau^{\frac 52}&0\tau^{\frac 72}&4376\tau^3&0\tau^{\frac 52}&4368\tau^2
\end{bmatrix}.
$$
\end{remark}

Recall that $M(s)$ denotes the $(2n-1)\times(2n-1)$ matrix obtained from the
matrix $Z(s)$ defined in the statement of
Proposition~\ref{prop-crucialrole}-(ii) by exclusion of
its first column.

\begin{proposition} \label{prop-mainlinear}
The matrices $M$ and $M(s)$ have the following properties:
\begin{itemize}[parsep=1ex]
\item[\rm (i)] If $n$ is even, one has:
\begin{itemize}[parsep=1ex]
\item[\rm (a)] $M$ has rank $2n-2$;
\item[\rm (b)] there exists  $j_*\in\{1,\dots,2n-1\}$ such that
$$
\det M_{j_*}=\mu_0\tau^{{\gamma_0}}+\sum_{i=1}^{2n-1}\mu_id_i\tau^{{\gamma_i}},
$$
where $\mu_0\neq0,\mu_1,\dots,\mu_{2n-1}$ and $\gamma_0>\gamma_1>\dots>\gamma_{2n-1}>0$
are all integers.
\end{itemize}
\item[\rm (ii)] If $n$ is odd, for any $s\geq 2n$, one has:
\begin{itemize}[parsep=1ex]
\item[\rm (a)] $M(s)$ has rank $2n-2$;
\item[\rm (b)] the determinant of $M_n(s)$ is given by
$$
\det M_n(s)=\mu_sd_s\tau^{{\gamma_s}}+\sum_{i=1}^{2n-2}\mu_id_i\tau^{{\gamma_i}},
$$
where $\mu_1,\dots,\mu_{2n-2},\mu_s\neq 0$ and $\gamma_1>\dots>\gamma_{2n-2}>\gamma_s>0$ are all integers.
\end{itemize}
\end{itemize}
\end{proposition}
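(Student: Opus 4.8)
The plan is to dispatch the four assertions by combining Proposition~\ref{prop-crucialrole} (linear independence of the families $e_1Q^k$), the description of minors of $Z$ in Lemma~\ref{lem-powers} and Remark~\ref{remk-lemma} (each such minor is $0$ or a monomial $\mu\tau^\gamma$ with $\mu\neq0$ and $\gamma$ a nonnegative integer), and Cramer's rule; throughout I use the \emph{degree rule} behind Proposition~\ref{prop-p&q}(iv)--(v), namely that in a square submatrix of $Z$ the row $e_1Q^k$ contributes $k/2$ to the $\tau$-exponent, the column $p_\ell$ contributes $-\ell/2$ and the column $q_\ell$ contributes $-(\ell+1)/2$, and that if the resulting total is not an integer the minor vanishes (minors of $Z$ being polynomials in $\tau$). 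For \emph{(i)(a)}: the rows of $Z$ are $e_1Q^2,\dots,e_1Q^{2n}$, linearly independent by Proposition~\ref{prop-crucialrole}(i) with $s=2$, so $\operatorname{rank}Z=2n-1$ and $\operatorname{rank}M\ge 2n-2$; for the opposite inequality I would observe that $\det M$ equals the determinant of the $2n\times2n$ matrix $\widetilde M$ obtained by prepending the row $e_1=e_1Q^0$ to $Z$ (expand along $(1,0,\dots,0)$, noting the first column of $Z$ consists of the first coordinates of its rows), whose rows are the $e_1Q^j$, $j\in\{0,2,3,\dots,2n\}$. By Cayley--Hamilton $Q^{2n}$ is a combination of $Q^0,\dots,Q^{2n-1}$ in which the coefficient of $Q^1$ is $\pm$ the $x^1$-coefficient of $\chi_Q(x)=\prod_\ell(x-\lambda_\ell)^2$ (algebraic multiplicities $2$ by Corollary~\ref{kac-cor}(ii)); for $n$ even $\{(n-1-2\ell)\sqrt\tau\}$ is symmetric about $0$ and omits $0$, so $\prod_\ell(x-\lambda_\ell)$, hence $\chi_Q$, is even and that coefficient is $0$. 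Thus $e_1Q^{2n}$ lies in the span of the $e_1Q^j$ with $j\in\{0,2,3,\dots,2n-1\}$, which are rows of $\widetilde M$, so $\det\widetilde M=0=\det M$ and $\operatorname{rank}M=2n-2$.

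For \emph{(i)(b)}: since $\operatorname{rank}Z=2n-1$ while the minor $Z^{(p_0)}$ of $Z$ obtained by deleting the $p_0$-column equals $\det M=0$, there is a column $c_*\ne p_0$ with $Z^{(c_*)}\neq0$; I take $j_*=c_*$. Expanding $\det M_{j_*}$ by multilinearity in the $j_*$-th column and writing $P=P_\tau+P_d$, the $P_\tau$-part is $\pm Z^{(c_*)}=\mu_0\tau^{\gamma_0}$ with $\mu_0\neq0$, and the $P_d$-part is $\sum_i d_i$ times the $(2n-2)\times(2n-2)$ minor of $Z$ obtained by deleting the row $e_1Q^{i+1}$ and the columns $p_0,c_*$. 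The degree rule gives $\gamma_0=\tfrac12(n^2+n-1)-c_{c_*}$, where $c_{c_*}$ is the column-contribution of $c_*$; integrality of $\gamma_0$ (forced since $Z^{(c_*)}\neq0$) makes $c_*$ a $p_\ell$-column with $\ell$ odd or a $q_\ell$-column with $\ell$ even, whence $\gamma_0\ge\tfrac12(n^2+n)>n>0$. The exponent of the $d_i$-term is $\gamma_0-\tfrac{i+1}{2}$, an integer — hence a possibly nonzero minor — only for $i$ odd, and these are distinct strictly decreasing positive integers, all $<\gamma_0$. This is the asserted form of $\det M_{j_*}$, and since the $d_i$ are constants while $\mu_0\neq0$, the equation $\det M_{j_*}=0$ is a nontrivial polynomial equation in $\tau$.

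For \emph{(ii)(a)}: $Z(s)$ has rows $\Lambda_s$, so $\operatorname{rank}Z(s)=2n-2$ because $\Lambda$ is independent and $\Lambda_s$ dependent by Proposition~\ref{prop-crucialrole}(ii)(a); by Proposition~\ref{prop-crucialrole}(ii)(b) the $p_0$-column lies in the span of the $p_2,p_4,\dots,p_{n-1}$-columns, so deleting it preserves the column rank, giving $\operatorname{rank}M(s)=2n-2$ and $\det M(s)=0$; since $x_0$ solves $M(s)x=P$, Cramer gives $\det M_j(s)=0$ for all $j$. For the setup of \emph{(ii)(b)}: the $n$-th column of $M(s)$ is the $q_0$-column, and the $P_\tau$-part of $\det M_n(s)$ equals $\pm$ the determinant of $Z(s)$ with the $q_0$-column removed, which vanishes because $\operatorname{rank}Z(s)=2n-2$; hence $\det M_n(s)=\sum_i d_i\cdot(\text{minor})$, with no $\tau$-only term. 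The degree rule puts the exponent of the ``new'' coefficient $d_s$ at $\gamma_s=\tfrac{n(n-1)}{2}$, a positive integer since $n$ is odd, with the exponents of the remaining $d_i$ exceeding $\gamma_s$ by $\tfrac{s-i-1}{2}>0$ and (after discarding the half-integer ones) forming a strictly decreasing integer sequence above $\gamma_s$.

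It therefore remains — and this is the main point — to show that the coefficient $\mu_s$ of $d_s$ is nonzero, i.e. that the $(2n-2)\times(2n-2)$ submatrix $S$ of $Z$ with rows $\Lambda$ and all columns except $p_0,q_0$ is nonsingular. I would argue by contradiction: since $\Lambda$ is independent, $\det S=0$ would yield a nonzero $v=\sum_{k=2}^{2n-1}c_k\,e_1Q^k$ supported only on the $p_0$- and $q_0$-coordinates. Expanding $v$ in the eigenbasis of $Q$ (Corollary~\ref{kac-cor}), using the explicit eigenvectors of the $\tau$-Kac matrix and that $e_1$ has coordinates $a_\ell=\binom{n-1}{\ell}/2^{n-1}$ there (from the proof of Lemma~\ref{kac-lemma}), one finds this forces $P(x)=\sum_k c_kx^k$ to satisfy $P(\lambda_\ell)=\mu$ and $P'(\lambda_\ell)=\mu'$ for all $\ell$ and some constants $\mu,\mu'$; hence $P=\mu+\mu'\,\Pi\widetilde R$, where $\Pi(x)=\prod_\ell(x-\lambda_\ell)$ and $\widetilde R$ is the degree-$\le n-1$ interpolant of $1/\Pi'$ at the nodes $\lambda_\ell$. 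For $n$ odd, $\{\lambda_\ell\}$ is symmetric about $0$ and contains $0$, so $\Pi$ is odd and $\widetilde R$ even; thus $\Pi\widetilde R$ is odd, its constant term is $0$, and its $x^1$-coefficient equals $1$ (a short computation using $\Pi'(0)=\Pi_0(0)$, where $\Pi(x)=x\Pi_0(x^2)$). Imposing that $P$ kills its $x^0$- and $x^1$-coefficients then forces $\mu=\mu'=0$, so $P=0$ and $v=0$, a contradiction; therefore $\det S\neq0$, $\mu_s\neq0$, and $\det M_n(s)=0$ is nontrivial in $\tau$ whenever $d_s\neq0$. I expect everything except this last non-vanishing of $\det S$ to be routine bookkeeping with the degree rule and the cited appendix results; the interpolation argument above is the only genuinely delicate part.
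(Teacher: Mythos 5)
Your proposal is correct, and on parts (i)(b) and (ii)(a), as well as on all the exponent bookkeeping, it runs essentially parallel to the paper's proof (pick $j_*$ so that the minor of $Z$ through the $P_\tau$-column is a nonzero monomial, split $P=P_\tau+P_d$, and apply Lemma~\ref{lem-powers}/Remark~\ref{remk-lemma} to each cofactor). The genuine differences are in the two non-routine steps, and both of your alternatives check out. For $\det M=0$ when $n$ is even, the paper simply counts zeros: by Proposition~\ref{prop-p&q}\,(i) the $n$ odd-indexed rows of $M$ lie in an $(n-1)$-dimensional coordinate subspace, hence are dependent; your route via $\det M=\det\widetilde M$ and Cayley--Hamilton (for $n$ even the characteristic polynomial $\chi_{\mathcal K}^2$ of $Q$ is even, so $Q^{2n}$ involves no $Q^1$ and $e_1Q^{2n}$ lies in the span of the remaining rows of $\widetilde M$) is heavier but valid, and it explains structurally why the degeneracy occurs rather than just exhibiting it. For $\mu_s\neq0$ in (ii)(b) --- which you rightly identify as the crux --- the paper reads the nonsingularity of the submatrix $S$ directly off Proposition~\ref{prop-crucialrole}\,(ii): parts (b) \emph{and} (c) say the two deleted columns $C_1$ and $C_{n+1}$ lie in the span of retained columns, so $S$ has the same column space as the full-row-rank matrix with rows $\Lambda$. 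Your Hermite-interpolation argument (a nontrivial dependence among the rows of $S$ would produce a vector in the row span of $\Lambda$ supported on the $p_0$- and $q_0$-coordinates, forcing a polynomial $P$ of degree at most $2n-1$ with $P(\lambda_\ell)=\mu$, $P'(\lambda_\ell)=\mu'$ at all nodes and vanishing $x^0$- and $x^1$-coefficients, whence $P=\mu+\mu'\,\Pi\widetilde R\equiv0$ by the parity of $\Pi$ and $\widetilde R$ when $n$ is odd) is a correct, self-contained replacement for Proposition~\ref{prop-crucialrole}\,(ii)(c) and the Vandermonde computations behind it; it is the row-space dual of the paper's column-space argument and arguably more conceptual, at the cost of redoing by hand some of what the appendix already provides. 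One cosmetic caveat, inherited from the statement rather than introduced by you: both your proof and the paper's yield consecutive integer exponents for the at most $n$ nonvanishing $d_i$-terms, so the chain $\gamma_0>\gamma_1>\dots>\gamma_{2n-1}$ of $2n$ distinct integers should be understood as referring only to the terms with $\mu_i\neq0$; what matters for the application --- a nontrivial algebraic equation in $\tau$ with a guaranteed nonzero term --- is delivered by both arguments.
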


\begin{proof}
(i)-(a) Looking at the position of the zero entries of $M$ as given in Proposition~\ref{prop-p&q}-(i),
one easily sees that its odd rows define a set of
$n$ vectors spanning an $(n-1)$-dimensional vector space. Hence, they are linearly dependent, that is,
the rank of $M$ is at most $2n-2$. However, Proposition~\ref{prop-crucialrole}-(i) for $s=2$
gives that $Z$ has rank $2n-1$, so that the rank of $M$ is exactly $2n-2$.

\vspace{.1cm}
\noindent
(i)-(b) Denote by $M_j^\tau$ (resp. $M_j^d$) the matrix obtained from $M$ by replacing its
$j$-th column with the column matrix $P_\tau$ (resp. $P_d$) defined in~\eqref{eq-ptau}.
Since, as seen above, $Z=[-P_\tau, M]$ has maximal rank $2n-1$,
there exists  $j_*\in\{1,\dots,2n-1\}$ such that $\det M_{j_*}^{\tau}\neq 0$.
By applying Lemma~\ref{lem-powers} to the matrix $M_{j_*}^{\tau}$ (see also Remark~\ref{remk-lemma}), we conclude
that there exist integers  $\mu_0\neq 0$ and  $\gamma_0>0$
such that $\det M_{j_*}^{\tau}=\mu_0\tau^{\frac{\gamma_0}{2}}$.

With the notation of Lemma~\ref{lem-powers}, we have for the matrix $X=M$ that
\begin{equation}\label{eq-bc}
2b_i=i-1,\quad 2c_j=\left\{\begin{array}{ll}
2-j&\text{if}\ j\in\{1,\dots,n-1\};\\[1ex]
n+1-j&\text{if}\ j\in\{n,\dots,2n-1\}.
\end{array}\right.
\end{equation}

To obtain the constants $\mu_1,\dots,\mu_{2n-1}$ and $\gamma_1,\dots,\gamma_{2n-1}$ as in the
statement, it suffices to
expand $\det M_{j_*}^d$ with respect to its $j_*$-th column, and then apply Lemma~\ref{lem-powers}
to any minor occurring in such Laplace expansion. Due to the relations~\eqref{eq-bc}, it is clear that the
exponents $\gamma_i=\gamma_i(j_*)$ are all integers. The same is true for the coefficients $\mu_i$,
since any entry of the matrix $M$ is either an integer or an integer
multiple of some power of $\tau$.

It follows from the first equality in~\eqref{eq-bc} that
the function $i\mapsto b_i$ is strictly increasing,
so that  $i\mapsto\gamma_i$
is strictly decreasing (notice that the power $\gamma_i/2$ comes from the minor
that suppresses the $i$-th row of $M$). It is also clear that $\gamma_0=\gamma_1+1$.
Finally, considering all values of $j$ and writing $\gamma_i=\gamma_i(j)$,
we have from the second equality in~\eqref{eq-bc}
that the smallest value of $\gamma_{2n-1}(j)$ is achieved  for $j=1$ or $j=n$.

Taking all that into account, we have
\begin{eqnarray*}
{\gamma_{2n-1}}&=&{\gamma_{2n-1}(j_*)} \,\geq\, 2\sum_{i=1}^{2n-2}b_i+2\sum_{j=2}^{2n-1}c_j\\
&=&\frac12\left[(2n-2)(2n-3)-(n-2)(n-3)-n(n-3)\right]\\
&=&{n(n-1)}>0.
\end{eqnarray*}

\vspace{.1cm}
\noindent
(ii)-(a) It follows directly from items (a) and (b) of Proposition~\ref{prop-crucialrole}-(ii).

\vspace{.1cm}
\noindent
(ii)-(b) Defining $M_j(s)$, $M_j^\tau(s)$ and $M_j^d(s)$ analogously to
$M_j$, $M_j^\tau$ and $M_j^d$,  we have from
Proposition~\ref{prop-crucialrole}-(ii) that
$\det M_j^\tau(s)=0$ for all $j,$ in particular, for $j=n$. So,
$$\det M_n(s)=\det M_n^d(s).$$

Henceforth, proceeding just as in the proof of
(i)-(a), we get the equality for $\det M_n$ as in the statement.
It only remains to prove that $\mu_s\neq 0$. However,  $\mu_s$
is the determinant of the submatrix of $M(s)$ obtained
by removing its last row and its $n$-th column (be aware of the ordering of the columns:
the $n$-th column of $M(s)$ is the $(n+1)$-th column of $Z(s)$) and,
by Proposition~\ref{prop-crucialrole}-(ii), this submatrix is non-singular.
\end{proof}

\end{document}